\documentclass{amsart}

\usepackage[T1]{fontenc}
\usepackage[utf8]{inputenc}
\usepackage{lmodern}
\usepackage{microtype}
\usepackage{amsmath,amssymb,amsfonts,amsthm,mathtools}
\usepackage{stmaryrd}
\usepackage{amsmath}
\usepackage{tikz-cd}
\usepackage{enumitem}
\usepackage{xcolor}
\usepackage{hyperref}
\usepackage[nameinlink,capitalize]{cleveref}
\usepackage{listings}
\usepackage{natbib}
\crefalias{appendix}{Appendix}
\DeclareMathOperator{\Aut}{Aut}
\newtheorem{theorem}{Theorem}[section]
\newtheorem{lemma}[theorem]{Lemma}
\newtheorem{proposition}[theorem]{Proposition}
\newtheorem{corollary}[theorem]{Corollary}
\theoremstyle{definition}
\newtheorem{definition}[theorem]{Definition}
\theoremstyle{remark}
\newtheorem{remark}[theorem]{Remark}

\newcommand{\CC}{\mathbb C}
\newcommand{\QQ}{\mathbb Q}
\newcommand{\ZZ}{\mathbb Z}
\newcommand{\PP}{\mathbb P}
\newcommand{\OO}{\mathcal O}
\newcommand{\HH}{\mathcal H}
\newcommand{\NE}{\operatorname{NE}}
\newcommand{\Hdg}{\mathrm{Hdg}}
\newcommand{\Bl}{\operatorname{Bl}}

\newcommand{\rk}{\operatorname{rk}}

\newcommand{\QDM}{\operatorname{QDM}}
\newcommand{\ev}{\operatorname{ev}}
\newcommand{\res}{\operatorname{Res}}

\newcommand{\Hom}{\operatorname{Hom}}
\newcommand{\Eu}{\mathrm{Eu}}
\newtheorem*{mainthm}{\textbf{\cref{thm:conditional-Verra}}}

\DeclareMathOperator{\PGL}{PGL}

\address{Department of Mathematics, Imperial College London,
  London SW7 2AZ, United Kingdom}
\email{aideen.fay23@imperial.ac.uk}
\author[]{Aideen Fay}
\title{Equivariant irrationality of very general symmetric Verra fourfolds}
\begin{document}
\pagestyle{plain}

\begin{abstract}

We prove that a very general complex symmetric Verra fourfold is not \(\ZZ/2\)-birational to \(\PP^4\), using the \emph{theory of atoms} introduced by Katzarkov--Kontsevich--Pantev--Yu~\citep{Katzarkov2025BirationalIF} in the equivariant setting as in Cavenaghi--Katzarkov--Kontsevich~\citep{cavenaghi2026atomsmeetsymbols}.

\end{abstract}

\maketitle

\section{Introduction}
\label{sec:intro}
In their seminal work, Katzarkov--Kontsevich--Pantev--Yu~\citep{Katzarkov2025BirationalIF} introduce the \emph{theory of atoms}, which combines Hodge theory and Gromov--Witten invariants to define birationally invariant properties of smooth complex projective varieties. This is underpinned by Iritani's blowup formula~\cite[Theorem 1.1]{Iritani2023QuantumCO}, which dictates the additive decomposition of quantum $D$-modules under blowups along smooth centres. This approach has already settled long-standing conjectures on the irrationality of certain Fano varieties---most recently for cubic~\cite{Katzarkov2025BirationalIF, guere2026irrationality} and Gushel--Mukai~\cite{Benedetti2026QuantumCA} fourfolds.

Progress to date leaves the Verra fourfold as the last of the classical Fano fourfolds of K3 type with index~$>1$~\cite{AIF_0__0_0_A50_0}, whose rationality remains unknown. Realized as a double cover
\[
  \pi \colon X \to \mathbf{P}^2 \times \mathbf{P}^2
\]
branched along a smooth divisor \(B\) of bidegree \((2,2)\), a Verra fourfold \(X\) naturally admits two quadric fibrations over \(\mathbf{P}^2\). The discriminant locus of each fibration is a plane sextic curve whose double cover, when smooth, is a
degree-two K3 surface~\citep{Laterveer2019AlgebraicCA}. This explicit K3 geometry presents a unique challenge to the birational invariants introduced above, which remain inconclusive here~\cite{Benedetti26Verra}.

If we shift perspective, we notice that the branch locus \(B\) of a \emph{symmetric} Verra remains invariant under swapping of the \(\mathbf P^2\times \mathbf P^2\) factors. In this
case the factor-swap lifts to an automorphism $\epsilon:X\longrightarrow X$, and hence defines an action of the group $G = \ZZ/2 = \langle \epsilon \rangle$ on the cohomology of $X$. 

This observation motivates us to adopt the $G$-equivariant obstructions introduced by Cavenaghi--Katzarkov--Kontsevich~\citep{cavenaghi2026atomsmeetsymbols} to prove $\ZZ/2$-irrationality of the very general symmetric Verra fourfold.
\begin{definition}[$\mathbb{Z}/2$-rationality]
\label[definition]{def:z2-rational}
Let $Y$ be a smooth projective variety of dimension $n$ with an involution
$\iota:Y\to Y$. The pair $(Y, \iota)$ is \emph{$\ZZ/2$-rational} if there is a
birational map $\varphi:Y\dashrightarrow\PP^n$ and a regular involution
$\sigma:\PP^n\to\PP^n$ such that $\sigma=\varphi\circ\iota\circ\varphi^{-1}$ on the Zariski dense open subset where $\varphi^{-1}$ is defined. Otherwise $(Y,\iota)$ is \emph{$\ZZ/2$-irrational}.
\end{definition}

\begin{mainthm}
Let $X$ be a very general symmetric Verra fourfold, and let $\epsilon$ be its factor-swap involution. Then \((X,\epsilon)\) is \(\mathbb Z/2\)-irrational. Equivalently, for every regular involution \(\iota\in\Aut(\PP^4)\), the pair
\((X,\epsilon)\) is not \(\mathbb Z/2\)-birational to \((\PP^4,\iota)\).\end{mainthm}

\begin{remark}
The same argument applies if \(\PP^4\) is replaced by any smooth projective
fourfold \(P\) with a regular \(\ZZ/2\)-action and no
cohomology classes of Hochschild degree 2.
For instance, one may take \(P=\mathbb P^2\times \mathbb P^2\).
However, the argument does not apply to an arbitrary smooth rational fourfold. Thus the
involution \(\iota\) in the statement must be regular and not just rational. 
\end{remark}

In pursuit of this, we verify the $G$-equivariance of Iritani's blowup decomposition (\cref{thm:G-equivariant-Iritani}), and introduce  an evaluated $G$-equivariant version of these obstructions, following Guéré's work in the non-equivariant setting~\citep{guere2026irrationality}. Specifically, we define in this paper what very general
means for symmetric Verra fourfolds. Indeed, the
natural condition of being Hodge-general is actually
empty (see \cref{cor:Tminus-Hodge}), but a more refined condition
that we call $\mathbb{Z}/2$-Hodge generality is not (see 
\cref{thm:G-Hodge-general-nonempty}). We refer to~\cref{sec:very-general-G-Hodge} for the precise definitions.

\section{Evaluated Hodge invariants}
\label{sec:quantum-products}

We recall the notation used throughout the paper. We always work with smooth projective varieties over \(\mathbb C\).  The notation and definitions follow Gu\'er\'e's evaluated formulation~\citep{guere2026irrationality} of the Hodge-theoretic invariants introduced in~\citep{Katzarkov2025BirationalIF}.

\begin{definition}\label{def:hodge-hochschild}
Let \(Y\) be a smooth projective complex variety.  A class
\[
  \gamma\in H^{2p}(Y,\QQ)
\]
is a \emph{rational Hodge class} if its complexification lies in \(H^{p,p}(Y)\).  We write
\[
  H^*(Y,\QQ)_{\Hdg}:=\bigoplus_p( H^{2p}(Y,\QQ)\cap H^{p,p}(Y)).
\]
For \(k\in\ZZ\), set
\[
  H^{(k)}(Y):=\bigoplus_{p-q=k}H^{p,q}(Y)\subset H^*(Y,\CC).
\]
We call \(H^{(k)}(Y)\) the Hochschild-degree-\(k\) part of the Hodge decomposition.
\end{definition}

Let \(\NE(Y)\subset H_2(Y,\ZZ)\) be the monoid of effective curve classes in $Y$. For \(d\in\NE_{\mathbb N}(Y)\), we associate the formal Novikov variable \(Q^d\in \QQ[\NE_{\mathbb N}(Y)]\) which records the curve class of stable maps
contributing to a Gromov--Witten invariant. It is assigned the grading $\deg Q^{d}=2\,c_{1}(Y)\cdot d$.

Fix a homogeneous basis $\{\alpha_{i}\}_{i=0}^{m}$ of the rational Hodge classes
$H^{*}(Y,\mathbb{Q})_{\mathrm{Hdg}}$. Let $\{t^{i}\}_{i=0}^{m}$ be formal variables, graded by $\deg t^{i}=2-\deg\alpha_{i}$. A Hodge
parameter is then $\tau=\sum_{i}t^{i}\alpha_{i}$. We use the completed Novikov--Hodge ring
\[
  \widehat R(Y,K)
  :=
  K[q'^{\pm1}]
  \llbracket \mathrm{NE}(Y)\rrbracket
  \llbracket t^{0},\dots,t^{m}\rrbracket ,
\]
where \(K\) is a number field, \(q'\) is a formal Laurent variable coming from Iritani's blowup formula, and the completion is graded; see
\cite[Section~2.2]{Iritani2023QuantumCO}. The subring $R(Y,K)\subset \widehat R(Y,K)$ consists of series with no non-zero powers of \(q'\).

The big quantum product is denoted by \(\star_\tau\). Let
\(\operatorname{Eu}_\tau\) be the Euler vector field defined as 
\[
  \operatorname{Eu}_\tau
  :=
  c_1(Y)
  +
  \sum_{i=0}^{m}
  \left(1-\frac{\deg\alpha_i}{2}\right)t^i\alpha_i.
\]
We then define the operator $\kappa_\tau$ by
\[
  \kappa_\tau:=\operatorname{Eu}_\tau\star_\tau(-).
\]
At \(\tau=0\), this specializes to the small quantum multiplication operator
\[
  \kappa_0=c_1(Y)\star_0(-).
\]
The operator \(\kappa_\tau\) preserves rational Hodge classes and Hochschild
degree \cite[Proposition~14]{guere2026irrationality}.

An evaluation, in the sense of \cite[Definition~20]{guere2026irrationality},
is a continuous degree-preserving homomorphism
\[
  \ev:\widehat R(Y,K)\longrightarrow S_{K}
\]
where \(S_K=F_K[b^{\pm1}]\), and \(F_K\) is the Levi--Civita field over \(K\).

Applying \(\ev\) to the coefficients of
\(\kappa_\tau\) gives an \(S_{K}\)--linear endomorphism of \(H^*(Y,K)\otimes_K S_{K}\).
Passing, if necessary, to a graded field extension \(L\supseteq S_{K}\) splitting
the characteristic polynomial of \(\ev(\kappa_\tau)\), we regard the evaluated
operator \(\ev(\kappa_\tau)\) as an endomorphism of \(H^*(Y,K)\otimes_K L\).

\begin{definition}[Evaluated generalized eigenspaces and Hochschild rank]
\label{def:evaluated-ranks}
Let \(\lambda\in L\). The evaluated generalized eigenspace is
\[
  E^Y_{\ev,\lambda}:=\ker\bigl(\ev(\kappa_\tau)-\lambda\bigr)^N,\qquad N\gg0,
\]
with Hodge-class part
\[
  \HH^Y_{\ev,\lambda}
  :=E^Y_{\ev,\lambda}\cap\bigl(H^*(Y,\QQ)_{\Hdg}\otimes_\QQ L\bigr).
\]
We set
\[
  \rho^Y_{\ev,\lambda}:=\rk_{L}\HH^Y_{\ev,\lambda},
\]
and define the Hochschild-degree-two rank
\[
  \nu^Y_{\ev,\lambda}
  :=\rk_{L\otimes\CC}
  \Bigl((E^Y_{\ev,\lambda}\otimes\CC)\cap
        \bigl(H^{(2)}(Y)\otimes_\CC(L\otimes\CC)\bigr)\Bigr).
\]
\end{definition}
Now suppose a finite group \(G\) acts algebraically on \(Y\). An evaluation is
\emph{\(G\)-invariant} if
\[
  \ev(Q^{g_*d})=\ev(Q^{d})\qquad\text{and}\qquad\ev(g^*\tau)=\ev(\tau)
\]
for every \(d\in\NE(Y)\) and every \(g\in G\). For small quantum cohomology all
Hodge variables are evaluated at zero, so the second condition is automatic.

\begin{definition}[Invariant rational Hodge rank]
\label{def:rhoG}
When \(E^Y_{\ev,\lambda}\) is \(G\)-stable, define
\[
  \rho^{G,Y}_{\ev,\lambda}:=\rk_{L}\bigl(\HH^Y_{\ev,\lambda}\bigr)^G.
\]
Thus \(\rho^{G}\) is the rank of the \(G\)-fixed rational Hodge classes in the
evaluated generalized eigenspace.
\end{definition}

\begin{remark}
For any $G$-invariant evaluation map $\ev$, the endomorphism $ev(\kappa_\tau)$ is
automatically $G$-equivariant. This is because the
virtual fundamental cycle used in the definition of
Gromov--Witten invariants is $G$-invariant.
Indeed, this follows from the $G$-action on the moduli
spaces of stable maps to $Y$ and the $G$-equivariance
of the perfect obstruction theory defining the virtual 
fundamental cycle.
\end{remark}

\section{Symmetric Verra Fourfolds}
\label{sec:verra}
Following~\cite{Coates2014QuantumPF}, we realize the Verra fourfold \(X\) as a hypersurface in the smooth toric fivefold \(\mathcal{F} := \operatorname{Tot}(\mathcal{O}_{\mathbf{P}^2 \times \mathbf{P}^2}(1,1))\). The total space \(\mathcal F\) is given by the Cox-coordinate quotient
\[
  \mathcal F = \bigl((\CC^3\setminus\{0\})\times(\CC^3\setminus\{0\})\times\CC\bigr) / (\CC^*)^2
\]
with coordinates \((x_i, y_i, w)\) and weight matrix
\begin{equation}\label{eq:weight-data}
\begin{array}{ccccccc|c}
  x_0 & x_1 & x_2 & y_0 & y_1 & y_2 & w & \\
  \hline
  1 & 1 & 1 & 0 & 0 & 0 & 1 & L \\
  0 & 0 & 0 & 1 & 1 & 1 & 1 & M
\end{array}
\end{equation}
For \(f_{2,2}\in H^0(\mathbf P^2\times\mathbf P^2,\mathcal O(2,2))\), the equation
\begin{equation}\label{eq:verra-eqn}
  w^2=f_{2,2}(x,y)
\end{equation}
defines a hypersurface
\(X\in|\mathcal O_{\mathcal F}(2,2)|\), which is smooth for general \(f_{2,2}\). Let
\[
  \varpi:\mathcal F\to \PP^2\times\PP^2
\]
be the bundle projection, and set
\[
  \pi:=\varpi|_X:X\to \PP^2\times\PP^2.
\]
Then \(\pi\) is the double cover branched along  
\[
  B=\{f_{2,2}=0\}\in |\OO(2,2)|.
\]
Composing \(\pi\) with the two projections from
\(\PP^2\times\PP^2\) gives two maps
\[
  p_1:X\to\PP^2,
  \qquad
  p_2:X\to\PP^2, 
\]
which are the two quadric surface fibrations on \(X\).

Let \(H^0(\OO(2,2))^+\) be the \(+1\)-eigenspace for the action induced by
the factor-swap \((x,y)\mapsto(y,x)\) on
\(H^0(\PP^2\times\PP^2,\OO(2,2))\). The corresponding symmetric linear subsystem is
\[
  |\OO(2,2)|^+
  :=
  \PP\bigl(H^0(\PP^2\times\PP^2,\OO(2,2))^+\bigr).
\]
If \(f_{2,2}\in H^0(\OO(2,2))^+\), then the factor-swap lifts to a regular
automorphism
\[
  \epsilon:X\longrightarrow X,
  \qquad
  (x,y,w)\longmapsto (y,x,w).
\]
This linear system has no base points. Hence, Bertini's theorem \cite[Corollary~III.10.9]{Hartshorne1977} gives a nonempty open locus of
smooth symmetric branch divisors. After trivializing $\mathcal{O}(1,1)$, Equation~\eqref{eq:verra-eqn} is locally defined as $u^2=f_{2,2}(x,y)$. By the Jacobian criterion, $X$ is singular exactly over the singular locus of $B$. Therefore, in characteristic zero, $X$ is smooth if and only if $B$ is smooth. Consequently, we restrict our attention to the open symmetric locus where $B$ is smooth.

\begin{remark}
In contrast, the discriminant sextics $\Delta_i \subset \mathbb{P}^2$, which define the singular loci of the quadric surface fibrations $p_i$, can acquire ordinary double points even when $X$ remains smooth; see \cite[Proposition~1.2]{Beauville1977}. 

\end{remark}
 
\subsection{Ambient cohomology}
\label{ssec:amb}
Let \(h_1,h_2\) be the hyperplane classes on the two factors of
\(\mathbf P^2\times\mathbf P^2\), and set \(H_1 := \pi^*h_1 = c_1(L)|_X\) and \(H_2 := \pi^*h_2 = c_1(M)|_X\). By adjunction, \(X\) is a Fano fourfold of index two with
\[
  -K_X=2(H_1+H_2).
\]
By the Lefschetz hyperplane theorem, we have
\[
  H^2(X; \mathbb{Z}) = \mathbb{Z} H_1 \oplus \mathbb{Z} H_2.
\]

Consider now the ambient part $A \subset H^*(X)$, defined as the image of
pullback from the base
\[
  A:=\operatorname{Im}\!\left(
  \pi^*:H^*(\mathbf P^2\times\mathbf P^2,\QQ)\to H^*(X,\QQ)
  \right).
\]

Because the composition $\pi_* \circ \pi^* = 2 \cdot \mathrm{id}$, the pullback is injective. Thus the relations $h_1^3 = h_2^3 = 0$ pull back directly to $H_1^3 = H_2^3 = 0$ on $X$, giving the presentation
$$
  A  = \QQ[H_1, H_2]/(H_1^3, H_2^3).
$$
The only top intersection needed below is
\[
  \int_X H_1^2H_2^2
  =
  \int_X\pi^*(h_1^2h_2^2)
  =
  2\int_{\mathbf P^2\times\mathbf P^2}h_1^2h_2^2
  =
  2.
\]

Let \(d_i\in NE(X)\) be the class of a line of \(H_i\)-degree one. Thus $H_i\cdot d_j=\delta_{ij}$. Equivalently, \(d_1\) is represented by a line in a general fibre of \(p_2\), and
\(d_2\) is represented by a line in a general fibre of \(p_1\). Therefore, for $H:=H_1+H_2$, we have \[H\cdot d_i=1,\qquad c_1(X)\cdot d_i=2.\]
\subsection{Hodge numbers}
\label{sec:hodge}
The Hodge numbers of the Verra fourfold were computed by~\cite{Laterveer2019AlgebraicCA}, with the corrected value of $h^{2,2}$ later provided by~\cite{Fatighenti2022TopicsOF}. The associated Hodge diamond is:
$$\begin{array}{ccccccccc}
& & & & 1 & & & & \\
& & & 0 & & 0 & & & \\
& & 0 & & 2 & & 0 & & \\
& 0 & & 0 & & 0 & & 0 & \\
0 & & 1 & & 22 & & 1 & & 0 \\
& 0 & & 0 & & 0 & & 0 & \\
& & 0 & & 2 & & 0 & & \\
& & & 0 & & 0 & & & \\
& & & & 1 & & & &
\end{array}$$
The middle cohomology of \(X\) satisfies \(\dim_{\mathbb{Q}} H^4(X,\mathbb{Q}) = 24\) with \(h^{3,1} = 1\), while all odd cohomology groups vanish. Let the 3-dimensional ambient middle cohomology be 
\[
  A_{\mathrm{mid}} := A \cap H^4(X,\mathbb{Q}) = \langle H_1^2, \; H_1H_2, \; H_2^2 \rangle.
\]
We define the primitive cohomology \(V(X)\) as the orthogonal complement of \(A_{\mathrm{mid}}\) with respect to the Poincaré pairing
\[
  H^4(X,\mathbb{Q}) = A_{\mathrm{mid}} \oplus V(X), \qquad \dim_{\mathbb{Q}} V(X) = 21.
\]

\begin{remark}[On special loci]
While \(V(X)\) and the transcendental cohomology coincide generically, the space \(H^{2,2}(X)\) can acquire additional algebraic classes on special loci, which causes the transcendental cohomology to become a proper subspace of \(V(X)\). This indeed happens for symmetric Verra fourfolds. 
\end{remark}

The complexified primitive cohomology decomposes as
\[
  V(X)_{\mathbb{C}} = H^{3,1}(X) \oplus V^{2,2}(X) \oplus H^{1,3}(X),
\]
where
\[
  V^{2,2}(X):=V(X)_{\mathbb C}\cap H^{2,2}(X).
\]
The dimensions are
\[
  \dim_{\mathbb C}H^{3,1}(X)=1,
  \qquad
  \dim_{\mathbb C}V^{2,2}(X)=19,
  \qquad
  \dim_{\mathbb C}H^{1,3}(X)=1.
\]

The involution acts on ambient classes by
\[
  \epsilon^*H_1=H_2,
  \qquad
  \epsilon^*H_2=H_1.
\]
Thus
\[
  A=A^+\oplus A^-.
\]
In middle degree, the invariant \(A^+_{mid}\) and anti-invariant \(A^-_{mid}\) parts have the respective bases
\[
  A_{\mathrm{mid}}^+
  =
  \langle H_1^2+H_2^2,\;H_1H_2\rangle,
  \qquad
  A_{\mathrm{mid}}^-
  =
  \langle H_1^2-H_2^2\rangle .
\]
In particular,
\[
  \dim A_{\mathrm{mid}}^+=2,
  \qquad
  \dim A_{\mathrm{mid}}^-=1.
\]
Since \(\epsilon^*\) preserves the Poincaré pairing and preserves
\(A_{\mathrm{mid}}\), it also preserves $V(X)=A_{\mathrm{mid}}^\perp$. We write the induced eigenspace decomposition as
\[
  V(X)=V^+(X)\oplus V^-(X).
\]

\begin{lemma}
\label[lemma]{lem:trace-ranks}
Let \(X\) be a symmetric Verra fourfold with the involution $\epsilon$. Then
\[
  \dim H^4(X)^+=16,
  \qquad
  \dim H^4(X)^-=8.
\]
Consequently,
\[
  \dim V^+(X)=14,
  \qquad
  \dim V^-(X)=7.
\]
\end{lemma}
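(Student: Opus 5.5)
The plan is to compute the trace of \(\epsilon^*\) on \(H^4(X,\QQ)\) with the topological Lefschetz fixed point formula. Since \(\epsilon\) has finite order, the Lefschetz number equals the Euler characteristic of the fixed locus:
\[
  \sum_i(-1)^i\operatorname{tr}\bigl(\epsilon^*\mid H^i(X,\QQ)\bigr)=\chi(X^\epsilon).
\]
All odd cohomology vanishes, so only even degrees contribute.
\begin{itemize}
\item On \(H^0\) and \(H^8\) the involution acts trivially, giving trace \(1\) each.
\item On \(H^2=\langle H_1,H_2\rangle\) it swaps the two basis vectors, giving trace \(0\).
\item By Poincaré duality the same holds on \(H^6\), so its trace is also \(0\).
\end{itemize}
Hence \(\operatorname{tr}(\epsilon^*\mid H^4)=\chi(X^\epsilon)-2\).

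Next I identify the fixed locus using the Cox description of \(\mathcal F\). A point \([x,y,w]\) is fixed if \((y,x,w)=(\lambda x,\mu y,\lambda\mu w)\) for some \((\lambda,\mu)\in(\CC^*)^2\).
\begin{itemize}
\item This forces \([x]=[y]\) in \(\PP^2\), so fixed points lie over the diagonal \(\Delta\cong\PP^2\).
\item Conversely, every point over \(\Delta\) has a representative of the form \((x,x,w)\), which \(\epsilon\) visibly fixes.
\end{itemize}
Therefore \(X^\epsilon=\pi^{-1}(\Delta)\). This is the double cover of \(\PP^2\) branched along the plane quartic \(C=\{f_{2,2}(x,x)=0\}\).

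The fixed locus of an involution on a smooth variety is smooth. So \(\pi^{-1}(\Delta)\) is a smooth double plane, and hence \(C\) is a smooth quartic of genus \(3\); thus \(X^\epsilon\) is a del Pezzo surface of degree \(2\). This smoothness step is the one point needing care: it is what lets the lemma hold for every symmetric Verra fourfold, not just a general one. As a backup, one can argue that the trace is constant over the connected smooth symmetric locus and compute it at a general member. The Euler characteristic is then
\[
  \chi(X^\epsilon)=2\chi(\PP^2)-\chi(C)=6-(-4)=10,
\]
so \(\operatorname{tr}(\epsilon^*\mid H^4)=8\).

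With \(\dim H^4=24\) and trace \(8\), the eigenspace dimensions are \(\dim H^4(X)^\pm=(24\pm8)/2\), that is, \(16\) and \(8\). Finally, \(\epsilon^*\) preserves the decomposition \(H^4=A_{\mathrm{mid}}\oplus V(X)\). Since \(\dim A^+_{\mathrm{mid}}=2\) and \(\dim A^-_{\mathrm{mid}}=1\), we get \(\dim V^+(X)=14\) and \(\dim V^-(X)=7\).
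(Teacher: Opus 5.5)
Your proposal is correct and is essentially the paper's proof: the Lefschetz formula with traces $1,0,\ast,0,1$, the fixed locus as the double plane over the diagonal branched along the quartic $C_4$, $\chi(X^\epsilon)=10$, trace $8$, then subtracting the ambient classes.

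The only difference is how you get smoothness of $C_4$. The paper's \cref{lemma:quartic} argues directly that symmetry forces $df_{2,2}|_{(p,p)}=(\alpha,\alpha)$, so a singular point of $C_4$ would be a singular point of $B$. You instead appeal to smoothness of fixed loci of involutions.

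That appeal works only in its scheme-theoretic form. You need that the fixed scheme $X^\epsilon=X\times_{\mathcal F}\mathcal F^\epsilon$ is the scheme $\{w^2=f_{2,2}(x,x)\}$. Set-theoretic smoothness alone does not exclude the degenerate case $f_{2,2}(x,x)\equiv 0$. There the fixed set is the smooth zero section over $\Delta_{\mathrm{diag}}$, with $\chi=3$. The paper's differential argument rules that case out directly. Your deformation-invariance backup also avoids the issue, provided you check that a general symmetric member has a smooth diagonal quartic.
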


\begin{proof}
Let $\Delta_{\mathrm{diag}}\subset \mathbb{P}^2\times\mathbb{P}^2$ be the diagonal. Because the lifted involution $\epsilon$ fixes the fibre coordinate $w$, its fixed locus $X^\epsilon$ is the double cover of $\Delta_{\mathrm{diag}}$ branched along $C_4 := B \cap \Delta_{\mathrm{diag}}$. The curve $C_4$ is a plane quartic, smooth by
\cref{lemma:quartic}. Hence $g(C_4) = 3$, and therefore $\chi(X^\epsilon) = 2\chi(\mathbb{P}^2) - \chi(C_4) = 10$.
 
By the topological Lefschetz fixed-point theorem,
\[
\sum_{i=0}^8 (-1)^i \operatorname{Tr}\bigl(\epsilon^* \big| H^i(X, \mathbb{Q})\bigr) = \chi(X^\epsilon) = 10.
\]
The odd cohomology of \(X\) vanishes. The involution \(\epsilon^*\) acts trivially on \(H^0(X)\) and \(H^8(X)\). It swaps the generators of \(H^2(X)\) and \(H^6(X)\) which gives a trace of zero in degrees \(2\) and \(6\). The Lefschetz formula then reduces to
\[
  1 + \operatorname{Tr}\bigl(\epsilon^*\bigm| H^4(X,\QQ)\bigr) + 1 = 10,
\]
which implies \(\operatorname{Tr}(\epsilon^*\mid H^4(X,\QQ))=8\).

Since the total dimension is \(\dim H^4(X,\QQ)=24\), the trace and dimension constraints yield the eigenspace dimensions \(\dim H^4(X)^+ = 16\) and \(\dim H^4(X)^- = 8\). Finally, subtracting $\dim A_{\mathrm{mid}}^+=2$ and $\dim A_{\mathrm{mid}}^-=1$ gives the stated dimensions of $V^+(X)$ and $V^-(X)$.
\end{proof}

\begin{lemma}\label[lemma]{lem:H31-invariant}
The involution \(\epsilon^*\) acts trivially on \(H^{3,1}(X)\).
\end{lemma}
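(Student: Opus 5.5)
The plan is to compute the action of \(\epsilon^*\) on \(H^{3,1}(X)\) through an explicit generator given by a residue. Since \(h^{3,1}=1\), \(\epsilon^*\) acts on \(H^{3,1}(X)\) by a scalar \(\pm1\), and we need to show this scalar is \(+1\). We use the Griffiths-type residue description for the double cover \(w^2=f_{2,2}\). Equivalently, we view \(X\) as a hypersurface in the weighted setting. The holomorphic forms then come from residues \(\operatorname{Res}\bigl(P\,\Omega/(w^2-f)^{k}\bigr)\), where \(\Omega\) is a torus-invariant volume form on the Cox quotient \(\mathcal F\) (or on its compactification \(\PP(\OO\oplus\OO(1,1))\)). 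Since \(K_X=-2H\), the class in \(H^{3,1}\) lies in the second step of the pole-order filtration. It is represented by \(\operatorname{Res}\bigl(P\,\Omega/(w^2-f)^2\bigr)\) with \(P\) of degree \(2(2,2)-(4,4)=(0,0)\) in the relevant grading, that is, \(P\) a constant, modulo the Jacobian ideal. Concretely, \(H^{3,1}(X)\) is spanned by the residue of \(\Omega_{\mathbf P^2\times\mathbf P^2}\,dw/(w^2-f)^2\), suitably homogenized.

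Next we determine how \(\epsilon\) acts on each ingredient. The equation \(w^2-f\) is invariant because \(f\in H^0(\OO(2,2))^+\), and \(w\) is fixed. So everything reduces to the sign by which the swap \((x,y)\mapsto(y,x)\) acts on the Euler-type volume form \(\Omega_{x}\wedge\Omega_{y}\), where \(\Omega_x=\sum(-1)^i x_i\,dx_0\wedge\cdots\widehat{dx_i}\cdots\wedge dx_2\) is a \(2\)-form. Swapping two \(2\)-forms gives \(\Omega_y\wedge\Omega_x=(-1)^{2\cdot2}\Omega_x\wedge\Omega_y=\Omega_x\wedge\Omega_y\). Hence the volume form is invariant, and taking residues commutes with \(\epsilon\). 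The generator of \(H^{3,1}\) is therefore fixed, and \(\epsilon^*=+1\) there.

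As an independent sanity check we can use the holomorphic Lefschetz formula. Alternatively, we note that the swap on \(H^{2,0}\) of the degree-two K3 double covers of the discriminant sextics is compatible with the quadric fibration correspondence. We would not rely on either check, however.

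The main obstacle is justifying the residue description rigorously. The ambient space is a noncompact toric variety (or a projective bundle compactification), and a pole-order/Jacobian-ring identification is needed for such double covers. If this proves delicate, we instead compute via a form on the affine chart. Consider the \(3\)-form \(\omega=\operatorname{Res}(\Omega_x\wedge\Omega_y\wedge dw/(w^2-f)^2)\) in weighted-homogeneous form. Checking its invariance under \(\epsilon\) only uses the sign computation above, together with the fact that the swap is orientation- and weight-compatible with the \((\CC^*)^2\)-quotient, since \(\epsilon\) interchanges \(L\) and \(M\). We must also confirm that the resulting class is nonzero in \(H^{3,1}\), which follows because the nonvanishing of the Jacobian-ring degree-\((0,0)\) piece matches \(h^{3,1}=1\).
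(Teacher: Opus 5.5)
Your proposal is correct and is essentially the paper's argument: the paper also takes the generator of $H^{3,1}(X)$ to be the residue of $dx_1\wedge dx_2\wedge dy_1\wedge dy_2\wedge dw/(w^2-f)^2$, notes that the denominator is swap-invariant, and gets the sign $(-1)^{2\cdot 2}=+1$ on the numerator. On the chart $x_0=y_0=1$ your $\Omega_x\wedge\Omega_y\wedge dw/(w^2-f)^2$ is exactly this form, and your remark that the residue is nonzero supplies a point the paper only asserts: the degree-$(0,0)$ Jacobian-ring piece is one-dimensional, and $F^3H^4=H^{3,1}$ since $h^{4,0}=0$.
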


\begin{proof}
A generator of \(H^{3,1}(X)\) is represented by the residue of a local toric 5-form on \(\mathcal F\)
\[
  \res_X\left(
  \frac{dx_1\wedge dx_2\wedge dy_1\wedge dy_2\wedge dw}{(w^2-f(x,y))^2}
  \right).
\]
The involution leaves the denominator invariant by the symmetry of~\cref{eq:verra-eqn}. The numerator is also fixed by anti-commutativity of the wedge product---an even number of permutations of the differentials yields \((-1)^{2\cdot2}=+1\). Hence, the residue generator is fixed.
\end{proof}

\begin{corollary}[Anti-invariant Hodge classes]
\label[corollary]{cor:Tminus-Hodge}
The eigenspace \(V^-(X)\) is
defined over \(\mathbb Q\). Moreover,
\[
  H^{3,1}(X)\oplus H^{1,3}(X)\subset V^+(X)_\CC,
\]
and hence
\[
  V^-(X)_\CC\subset H^{2,2}(X).
\]
Thus every class in \(V^-(X)\) is a rational Hodge class.
\end{corollary}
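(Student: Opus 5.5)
The plan has three steps: show that $V^-(X)$ is a rational subspace, locate $H^{3,1}\oplus H^{1,3}$ inside $V^+(X)_\CC$, and then use the orthogonality of the Hodge decomposition.

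\emph{Rationality of $V^-(X)$.} The involution $\epsilon$ is an automorphism of $X$, so $\epsilon^*$ acts on $H^4(X,\QQ)$. It preserves $A_{\mathrm{mid}}$, which is defined over $\QQ$, and it preserves the Poincaré pairing. Hence $V(X)=A_{\mathrm{mid}}^\perp$ is a $\QQ$-subspace stable under $\epsilon^*$. On this subspace, $V^-(X)=\ker(\epsilon^*+1)$ is the kernel of a rational endomorphism, so it is defined over $\QQ$. The projector $\tfrac12(1-\epsilon^*)$ makes this explicit.

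\emph{The $(3,1)$ and $(1,3)$ parts lie in $V^+$.} Because $\epsilon$ is holomorphic, $\epsilon^*$ preserves each $H^{p,q}(X)$. By \cref{lem:H31-invariant}, $\epsilon^*$ fixes $H^{3,1}(X)$. Since $\epsilon^*$ is defined over $\QQ$, hence over $\mathbb R$, it commutes with complex conjugation. Therefore it also fixes $H^{1,3}(X)=\overline{H^{3,1}(X)}$. These two summands lie in $V(X)_\CC$, because they are orthogonal to the classes $A_{\mathrm{mid}}$, which are of type $(2,2)$. So they sit in the $+1$-eigenspace $V^+(X)_\CC$.

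\emph{Conclusion.} The eigenspace decomposition $V(X)_\CC=V^+_\CC\oplus V^-_\CC$ is compatible with the Hodge decomposition, since $\epsilon^*$ preserves Hodge types. Consequently $V^-_\CC=\bigoplus_{p,q}(V^-_\CC\cap H^{p,q})$. The $(3,1)$ and $(1,3)$ components of $V^-_\CC$ lie in $H^{3,1}\oplus H^{1,3}\subset V^+_\CC$, which meets $V^-_\CC$ only in zero. Hence $V^-(X)_\CC\subset V^{2,2}(X)\subset H^{2,2}(X)$. Combining this with the rationality of $V^-(X)$, every element of $V^-(X)$ is a rational class whose complexification is of type $(2,2)$, that is, a rational Hodge class.

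The only step needing care is the compatibility of the eigenspace splitting with the Hodge decomposition. It follows because $\epsilon^*$ commutes with the Hodge projectors, so the eigenprojector $\tfrac12(1-\epsilon^*)$ preserves each $H^{p,q}$. The rest is routine.
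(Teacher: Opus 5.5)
Your proof is correct and follows the paper's argument. You get rationality of $V^-(X)$ from $\epsilon^*$ being a rational operator preserving $V(X)$. You then use \cref{lem:H31-invariant} together with the compatibility of $\epsilon^*$ with complex conjugation to place $H^{3,1}\oplus H^{1,3}$ in $V^+(X)_\CC$, and conclude because $V(X)_\CC$ only carries types $(3,1),(2,2),(1,3)$ (you use $h^{4,0}=h^{0,4}=0$ tacitly here). You also make explicit two points the paper leaves implicit: that $H^{3,1}\oplus H^{1,3}\subset V(X)_\CC$ because it is orthogonal to the $(2,2)$-classes $A_{\mathrm{mid}}$, and that the projector $\tfrac12(1-\epsilon^*)$ preserves Hodge types.
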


\begin{proof}
Since \(\epsilon^*\) is defined over \(\mathbb Q\) and preserves \(V(X)\), the
eigenspaces
\[
  V^\pm(X)=\ker(\epsilon^*\mp 1)\cap V(X)
\]
are defined over \(\mathbb Q\). By \Cref{lem:H31-invariant}, \(H^{3,1}(X)\subset V^+(X)_\CC\).  Since \(\epsilon\) is algebraic, it commutes with complex conjugation, so \(H^{1,3}(X)\subset V^+(X)_\CC\).  The only Hodge types in \(V(X)\) are \((3,1)\), \((2,2)\), and \((1,3)\).  Therefore \(V^-(X)_\CC\) has only type \((2,2)\), so that the sub-Hodge structure $V^-(X)$ contains only rational Hodge classes.
\end{proof}

\begin{remark}
Ordinary Hodge-generality is the condition that there
are no Hodge classes in the primitive cohomology $V(X)$.
Therefore, this condition always fails for symmetric Verra
fourfolds, due at least to the 7 independent Hodge
classes in the anti-invariant part.
This motivates our equivariant definition of Hodge
generality.
\end{remark}

\begin{definition}[Equivariant Hodge-generality]
\label{def:G-Hodge-general}
A symmetric Verra fourfold $X$ is called $\ZZ/2$-Hodge-general if
\[
  V^+(X)\cap H^4(X,\QQ)_{\Hdg}=0.
\]
Equivalently, $V^+(X)_\CC\cap H^{2,2}(X)$ contains no nonzero rational Hodge classes.
\end{definition}

\section{Very general symmetric Verra fourfolds }
\label{sec:very-general-G-Hodge}

In this section we use a Noether-Lefschetz argument to prove that a very general symmetric Verra fourfold is $\ZZ/2$-Hodge-general.  

\begin{proposition}[Dimension of the symmetric Verra moduli space]
\label[proposition]{prop:msym-dim}
Let $\mathcal{M}_{\mathrm{sym}}$ be
the moduli space of smooth symmetric Verra fourfolds. Then $\dim \mathcal{M}_{\mathrm{sym}} = 12$.
\end{proposition}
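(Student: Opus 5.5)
The plan is a parameter count: compute the dimension of the space of symmetric branch divisors and subtract the dimension of the group of automorphisms of \(\PP^2\times\PP^2\) commuting with the factor swap.

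\textbf{Step 1: the parameter space.} Write \(H^0(\PP^2\times\PP^2,\OO(2,2))=S^2U\otimes S^2U\), where \(U=\CC^3\), so \(\dim S^2U=6\). The swap acts on this \(36\)-dimensional space by exchanging the two tensor factors. Its \(+1\)-eigenspace is therefore \(S^2(S^2U)\), of dimension \(\binom{7}{2}=21\). Concretely, the monomial basis of bidegree \((2,2)\) has \(36\) elements: \(6\) are fixed by the swap and \(30\) are exchanged in \(15\) pairs, giving \(6+15=21\). Hence \(|\OO(2,2)|^+\cong\PP^{20}\), and by the Bertini argument above the smooth locus \(U^{\mathrm{sm}}\subset\PP^{20}\) is a nonempty open subset of dimension \(20\).

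\textbf{Step 2: the symmetry group.} Isomorphisms between symmetric Verra fourfolds that respect \(\epsilon\) should be induced by automorphisms of \(\PP^2\times\PP^2\) commuting with the swap. The identity component \(\PGL_3\times\PGL_3\) of \(\Aut(\PP^2\times\PP^2)\) has centralizer of the swap equal to the diagonal \(\PGL_3\), acting by \((x,y)\mapsto(gx,gy)\). Adjoining the swap itself gives the finite extension \(\PGL_3\times\ZZ/2\), which has dimension \(8\). So I would define \(\mathcal M_{\mathrm{sym}}:=U^{\mathrm{sm}}/(\PGL_3\times\ZZ/2)\) and expect dimension \(20-8=12\).

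\textbf{Step 3: finite stabilizers.} The expected dimension is attained once the stabilizer of a general point is finite. I would show this by proving that the infinitesimal stabilizer vanishes: if \(\xi\in\mathfrak{sl}_3\) acts diagonally and annihilates \(f\in S^2(S^2U)\), then \(f=0\) or \(\xi=0\) for general \(f\). Restricting to the diagonal \(\Delta_{\mathrm{diag}}\cong\PP^2\) turns \(f\) into a plane quartic \(C_4\), which is smooth by \cref{lemma:quartic}. A vector field preserving \(f\) would preserve \(C_4\). A smooth plane quartic has genus \(3\), so its automorphism group is finite and it has no infinitesimal automorphisms. Any \(\xi\) stabilizing \(f\) would then restrict to zero on \(C_4\), and since \(C_4\) spans \(\PP^2\), the element \(\xi\in\mathfrak{sl}_3\) is zero. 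Equivalently, \(f\) is GIT-stable, because smooth hypersurfaces of degree \(\ge3\) are stable; this gives a geometric quotient of dimension \(12\).

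\textbf{Main obstacle.} The delicate point is the identification in Step 2: every isomorphism of pairs \((X,\epsilon)\to(X',\epsilon')\) should come from the ambient group. I would argue it as follows. Any isomorphism preserves \(-K_X=2H\) and the nef cone spanned by \(H_1,H_2\). Hence it preserves or swaps the two quadric fibrations \(p_1,p_2\), and therefore descends to \(\pi\). Compatibility with \(\epsilon\) then forces the descended map to commute with the swap. Alternatively, since the statement only concerns dimension, it suffices to observe that the classifying map \(U^{\mathrm{sm}}\to\mathcal M_{\mathrm{sym}}\) has \(8\)-dimensional fibres. This holds by the stabilizer computation in Step 3 together with the descent of isomorphisms to \(\PP^2\times\PP^2\).
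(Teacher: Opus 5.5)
Your proposal is correct and uses the same parameter count as the paper's proof: the symmetric branch divisors form an open subset of $\PP(\operatorname{Sym}^2 H^0(\PP^2,\OO(2)))\cong\PP^{20}$, and one subtracts $\dim\PGL_3=8$ for the diagonal action. Your Step~3 and your descent argument supply points the paper leaves implicit. Step~3 proves the stabilizers are finite using the genus-$3$ diagonal quartic; strictly, it uses that $C_4$ is an irreducible curve not contained in a projectivized eigenspace of a non-scalar $\xi$, not merely that it spans $\PP^2$. The descent argument shows that isomorphisms descend to $\PP^2\times\PP^2$ via the nef cone spanned by $H_1,H_2$.
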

\begin{proof}
Let $V:=H^0(\mathbb P^2,\mathcal O_{\mathbb P^2}(2))$. Then $H^0(\mathbb P^2\times\mathbb P^2,\mathcal O(2,2))^+
  \cong \operatorname{Sym}^2 V$.
Since \(\dim V=6\), the symmetric branch divisors form an open subset of $\mathbb P(\operatorname{Sym}^2 V)\cong \mathbb P^{20}$.
Quotienting by the diagonal action of \(\operatorname{PGL}_3\) yields $\dim \mathcal{M}_{\mathrm{sym}} = 12$.  
\end{proof}

\begin{proposition}[Dimension of the invariant period domain]
\label[proposition]{prop:dim-ipd}
Let $D^+$ be the period domain of the invariant primitive variation \(V^+(X)\). Then $\dim D^+ = 12$. 
\end{proposition}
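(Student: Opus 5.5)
The plan is to identify \(D^+\) as a type IV period domain, that is, a Hermitian symmetric domain attached to a rational quadratic space of signature \((2,n)\), and then read off its dimension from the signature of \(V^+(X)\). The relevant data are the Hodge numbers of \(V^+(X)_\CC\). By \cref{cor:Tminus-Hodge}, both \(H^{3,1}(X)\) and \(H^{1,3}(X)\) lie in \(V^+(X)_\CC\). Since \(\dim V^+(X)=14\) by \cref{lem:trace-ranks}, the Hodge decomposition of the invariant part is
\[
  V^+(X)_\CC = H^{3,1}(X)\oplus \bigl(V^+(X)_\CC\cap H^{2,2}(X)\bigr)\oplus H^{1,3}(X),
\]
with dimensions \(1,12,1\). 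This is therefore a Hodge structure of K3 type: \(h^{3,1}=1\) and weight \(4\).

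Next I will determine the signature of the Poincaré pairing restricted to \(V^+(X)\). The pairing is nondegenerate on \(V^+(X)\), because \(\epsilon^*\) is an isometry, so \(V^+\perp V^-\) and the full pairing is nondegenerate on \(V(X)\). The Hodge--Riemann bilinear relations apply to primitive classes with respect to the ample class \(H\). The relevant primitivity is for \(H\), not just orthogonality to \(A_{\mathrm{mid}}\). Since \(V(X)\perp A_{\mathrm{mid}}\ni H^2\) and \(H^1\)-Lefschetz primitivity in degree \(4\) of a fourfold means \(H\cdot v=0\in H^6\), I will check this using \(H^6=\langle H_1^2H_2,H_1H_2^2\rangle\) and duality: \(H v\) pairs with \(H_1,H_2\) via \(\int v H H_i\), which vanishes because \(HH_i\in A_{\mathrm{mid}}\). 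Hence \(V(X)\subset H^4_{\mathrm{prim}}\). With this in hand, the Hodge--Riemann relations give that the pairing is positive definite on \((H^{3,1}\oplus H^{1,3})_{\mathbb R}\) and negative definite on the real \((2,2)\)-part, up to the sign convention \((-1)^{p}\)-type, with \(p-q=\pm2\) giving one sign and \(p=q\) the opposite. So \(V^+(X)\) has signature \((2,12)\).

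Finally, the period domain of polarized Hodge structures of this type is
\[
  D^+=\{[\sigma]\in\PP(V^+_\CC) : (\sigma,\sigma)=0,\ (\sigma,\bar\sigma)>0\},
\]
because a point is determined by the line \(H^{3,1}\), and \(H^{2,2}\) is then the orthogonal complement of \(H^{3,1}\oplus H^{1,3}\). This is an open subset of a smooth quadric in \(\PP^{13}\), so \(\dim D^+=13-1=12\), that is, it is \(n\) for signature \((2,n)\). The only point requiring care is the primitivity/signature step, in particular confirming that \(V(X)\) is genuinely \(H\)-primitive so that Hodge--Riemann applies. The rest is formal. Note that the result matches \(\dim\mathcal M_{\mathrm{sym}}=12\) from \cref{prop:msym-dim}, as needed for the subsequent Noether--Lefschetz argument.
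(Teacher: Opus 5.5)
Your proposal is correct and follows essentially the same route as the paper: the Hodge numbers $(1,12,1)$ of $V^+(X)$ (from \cref{lem:trace-ranks} and \cref{cor:Tminus-Hodge}) realize $D^+$ as an open subset of a quadric hypersurface in $\PP^{13}$, so $\dim D^+=12$. Your extra primitivity and signature check, which the paper omits, is useful because it shows this open set is nonempty; however, your stated sign is reversed. For the Poincaré pairing on a fourfold, Hodge--Riemann makes the real part of $H^{3,1}\oplus H^{1,3}$ negative definite and the primitive real $(2,2)$-part positive definite, so the signature is $(12,2)$ and the genuine period domain uses $(\sigma,\bar\sigma)<0$; this does not affect the dimension count.
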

\begin{proof}
Let \(\Lambda^+\) be a reference lattice for \(V^+(X)\) equipped with the Poincaré pairing. By \cref{lem:trace-ranks,lem:H31-invariant}, \(V^+(X)\) has
Hodge numbers
\[
  h^{3,1}=1,\qquad h^{2,2}=12,\qquad h^{1,3}=1.
\]
The corresponding invariant period domain is
\[
  D^+
  :=
  \Bigl\{
    [\omega]\in \PP(\Lambda^+_{\CC})
    \ \Bigm|\ 
    (\omega,\omega)=0,\;(\omega,\overline{\omega})>0
  \Bigr\}.
\]
Here \([\omega]\) records the line \(H^{3,1}\). Since
\(\dim\Lambda^+_{\CC}=14\), the ambient projective space is
\(\PP(\Lambda^+_{\CC})\cong\PP^{13}\). The condition \((\omega,\omega)=0\) cuts out a quadric hypersurface of complex dimension \(12\). The Hodge--Riemann positivity condition is an open real condition on this quadric, so it does not change the complex dimension. 
Therefore \(\dim D^+=12\).

\end{proof}

Let \(\Gamma^+\) denote the monodromy group acting on \(\Lambda^+\). 

\begin{proposition}[Period map dominance]
\label[proposition]{prop:invariant-period-full-rank}

The invariant period map
\[
  \Phi^+:\mathcal{M}_{\mathrm{sym}}\longrightarrow \Gamma^+\backslash D^+
\]
is dominant. Equivalently, its differential has full rank \(12\) at a general point.
In particular, \(\Phi^+\) is locally open at a general point.\end{proposition}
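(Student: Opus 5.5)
Since $\dim\mathcal M_{\mathrm{sym}}=\dim D^+=12$ by \cref{prop:msym-dim,prop:dim-ipd}, dominance is equivalent to the differential $d\Phi^+$ being injective at a single (hence general) point. By Griffiths transversality and the fact that $V^+(X)$ has $h^{3,1}=1$, the differential at $X$ is the map
\[
  T_{[X]}\mathcal M_{\mathrm{sym}}\longrightarrow \Hom\bigl(H^{3,1}(X),\,V^{+}(X)^{2,2}\bigr)\cong V^+(X)^{2,2},
\]
given by cup product with the Kodaira--Spencer class. The tangent space to the symmetric moduli is the $\epsilon$-invariant part of $H^1(X,T_X)$. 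Equivalently, it is the degree-$(2,2)$ invariant part of a Jacobian-type quotient: symmetric polynomials $g\in H^0(\OO(2,2))^+$ modulo the image of the diagonal $\mathfrak{gl}_3$ action and the scaling. Concretely, I will use Griffiths' residue calculus for the double cover $w^2=f$ in the toric fivefold $\mathcal F$, as in the generator of \cref{lem:H31-invariant}. The infinitesimal variation sends a first-order deformation $f\mapsto f+t g$ to
\[
  \res_X\Bigl(\frac{g\,\Omega}{(w^2-f)^3}\Bigr),
\]
where $\Omega=dx_1\wedge dx_2\wedge dy_1\wedge dy_2\wedge dw$. This class vanishes in $H^{2,2}$ (modulo $F^3$) if and only if $g$ lies in the Jacobian ideal of $w^2-f$ in the appropriate bidegree.

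The key steps, in order, are as follows. First, identify $H^{3,1}$ and the relevant graded piece of $H^{2,2}_{\mathrm{prim}}$ with graded pieces of the Jacobian ring
\[
  J(F)=\CC[x,y,w]/(\partial F),\qquad F=w^2-f.
\]
Since $\partial_w F=2w$, this reduces to the Jacobian ring of $f$ on $\PP^2\times\PP^2$ with bigrading (the Batyrev--Cox/Dolgachev residue description for ample hypersurfaces in toric varieties, valid since $X$ is quasi-smooth and ample in $|\OO_{\mathcal F}(2,2)|$). The map $d\Phi$ then becomes multiplication
\[
  J(f)_{(2,2)}\otimes J(f)_{(0,0)}\to J(f)_{(2,2)},
\]
which is essentially the identity on the tangent directions. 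Second, take $\epsilon$-invariants: $\epsilon$ acts on the Jacobian ring by the swap, and by \cref{lem:H31-invariant} the residue form is invariant. So the invariant deformations $J(f)^+_{(2,2)}$ map to $V^+(X)^{2,2}$, and injectivity reduces to showing
\[
  \dim J(f)^+_{(2,2)}=12,
\]
that is, that the invariant Jacobian ideal in degree $(2,2)$ has dimension exactly $21-12=9$. This counts $1$ for $f$ itself (Euler) plus $8$ for $\mathfrak{sl}_3$ acting diagonally.

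The main obstacle is the second part of the count: showing that the invariant partials $x_i\partial_{x_j}f+y_i\partial_{y_j}f$ (the diagonal $\mathfrak{gl}_3$ vector fields applied to $f$) are linearly independent for general symmetric $f$. Equivalently, $f$ has finite stabilizer in $\PGL_3$. The anti-invariant combinations lie in $J^-$ and do not matter. I would prove this by exhibiting one explicit symmetric $f$, for instance
\[
  f=q(x)q(y)+\textstyle\sum_i x_i^2y_i^2
\]
with a general conic $q$, and checking by direct computation (or a short Macaulay2 verification) that the nine vectors are independent and that $J(f)^+_{(2,2)}$ has dimension $12$. Semicontinuity then gives the statement at a general point. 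Finally, equality of dimensions makes $d\Phi^+$ an isomorphism at a general point, so $\Phi^+$ is a local biholomorphism there. This gives both the local openness and the dominance.
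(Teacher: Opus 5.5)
Your proposal is correct and is essentially the paper's argument: Griffiths transversality identifies the target with the $12$-dimensional space $H^{2,2}(V^+(X))$, the differential is identified via the Jacobian ring with $R^+_{2,2}$, and $\dim R^+_{2,2}=21-9=12$ follows from checking that the nine diagonal generators $x_j\partial_{x_i}f+y_j\partial_{y_i}f$ are linearly independent for one explicit symmetric $f$; openness of maximal rank and $\dim\mathcal{M}_{\mathrm{sym}}=\dim D^+$ then finish. The differences are minor: you use a different test polynomial, which your semicontinuity argument on the irreducible space of symmetric forms allows to be non-smooth (the paper instead verifies smoothness of its $f_0$ by computer), and you justify injectivity directly, since multiplication by the generator of $R_{(0,0)}=\CC$ is the identity, rather than citing infinitesimal Torelli.
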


\begin{proof}
At a smooth point \([X]\), Griffiths transversality implies the differential of the period map takes the form
\[
  d\Phi^+_{[X]} \colon
  T_{[X]}\mathcal{M}_{\mathrm{sym}}
  \longrightarrow
  \operatorname{Hom}\bigl(H^{3,1}(V^+(X)),H^{2,2}(V^+(X))\bigr).
\]
Since \(h^{3,1}(V^+(X))=1\), the target is naturally identified with \(H^{2,2}(V^+(X))\), which has dimension \(12\).

By \cref{prop:period-rank-computation}, there exists a smooth symmetric branch
polynomial \(f_0\) for which this differential has rank \(12\). Because maximal
rank is an open condition, \(d\Phi^+\) has rank \(12\) at a general point of
\(\mathcal{M}_{\mathrm{sym}}\). Finally, since
\(\dim \mathcal{M}_{\mathrm{sym}}=\dim D^+=12\)
(\cref{prop:msym-dim,prop:dim-ipd}), the differential is invertible at a general
point. Hence \(\Phi^+\) has open image near such a point, and therefore
\(\Phi^+\) is dominant.
\end{proof}

\begin{proposition}[Very general \(\ZZ/2\)-Hodge-generality]
\label[proposition]{thm:G-Hodge-general-nonempty} 
A very general symmetric Verra fourfold is \(\ZZ/2\)-Hodge-general.

\end{proposition}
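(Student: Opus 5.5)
The plan is a standard Noether--Lefschetz argument on the invariant period domain, using the dominance of $\Phi^+$ from \cref{prop:invariant-period-full-rank}. Fix the reference lattice $\Lambda^+$ and, for each nonzero $\lambda\in\Lambda^+$ (a rational class in $V^+$, after clearing denominators), consider the Noether--Lefschetz locus
\[
  D^+_\lambda:=\{[\omega]\in D^+ \mid (\omega,\lambda)=0\}.
\]
A point of $D^+$ corresponds to a Hodge structure on $\Lambda^+_\CC$ whose $(2,2)$-part is $\langle\omega,\overline\omega\rangle^\perp$. Thus $\lambda$ is a Hodge class exactly when $\lambda\perp\omega$, which forces $\lambda\perp\overline\omega$ as well since $\lambda$ is real. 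So $X$ fails to be $\ZZ/2$-Hodge-general if and only if its period lies in $\bigcup_{\lambda\neq 0}D^+_\lambda$. The first step is to check that each $D^+_\lambda$ is a proper closed analytic subset of $D^+$, of codimension one.

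This is clear because $(\omega,\lambda)=0$ is a single nontrivial linear condition on the quadric. It is proper since $D^+$ is open in the $12$-dimensional quadric $\{(\omega,\omega)=0\}$ and spans $\Lambda^+_\CC$. Were it not proper, every $\omega$ in an open subset of the quadric would satisfy $(\omega,\lambda)=0$; a smooth quadric of dimension $\ge 1$ is nondegenerate, so this would force $\lambda=0$. There are countably many $\lambda$, hence countably many hypersurfaces $D^+_\lambda$.

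The second step is to pull back along the (local lift of the) period map. The invariant part $V^+(X)$ forms a polarized variation of Hodge structure over the smooth symmetric locus $U\subset\PP(\operatorname{Sym}^2V)$ (or its $\PGL_3$-quotient). Locally on a simply connected open subset $W\subset U$ we have a holomorphic lift $\widetilde\Phi^+:W\to D^+$. The Noether--Lefschetz locus in $W$ is then $\bigcup_\lambda(\widetilde\Phi^+)^{-1}(D^+_\lambda)$. Each piece is a closed analytic subset, and in fact algebraic globally by Cattani--Deligne--Kaplan, though only properness is needed. Near a general point, \cref{prop:invariant-period-full-rank} says $\widetilde\Phi^+$ is a local biholomorphism onto an open set of $D^+$. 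Hence the preimage of the proper hypersurface $D^+_\lambda$ is a proper analytic subset of that neighbourhood. By irreducibility of $U$ (an open subset of a projective space) and the identity theorem, it is nowhere dense in $U$. The Hodge locus of $\lambda$ is algebraic by Cattani--Deligne--Kaplan and monodromy only permutes the $\lambda$. So the union over the countably many $\Gamma^+$-orbits is a countable union of proper Zariski-closed subsets of $U$. Its complement is, by definition, the very general locus, and on it $V^+(X)\cap H^4(X,\QQ)_{\Hdg}=0$.

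The main obstacle is really the input \cref{prop:invariant-period-full-rank}, which rests on the rank computation of \cref{prop:period-rank-computation}. Given that, the only delicate points are bookkeeping. First, one must ensure rational classes of $V^+$ correspond to lattice vectors of $\Lambda^+$ uniformly in the family; this holds because $V^+$ is a $\QQ$-local system, being defined by $\epsilon^*$ over $\QQ$ and $\epsilon$ varying in the family. Second, one must justify algebraicity, or at least closedness in the Zariski topology, of the individual loci so that ``very general'' has its usual meaning. If citing Cattani--Deligne--Kaplan is undesirable, I would instead phrase the result analytically: the complement of a countable union of nowhere dense closed analytic subsets, which is still dense by Baire.
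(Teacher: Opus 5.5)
Your proposal is correct and follows the same Noether--Lefschetz argument as the paper: each nonzero rational $\lambda\in\Lambda^+_\QQ$ cuts out a proper hyperplane section $D^+_\lambda$. Local openness of $\Phi^+$ from \cref{prop:invariant-period-full-rank} makes each preimage proper, and very general points avoid the countable union. You supply details the paper leaves implicit (why $D^+_\lambda$ is proper, the identity-theorem globalization, Baire/Cattani--Deligne--Kaplan); one small correction is that on the $20$-dimensional $U\subset\PP(\operatorname{Sym}^2V)$ the lifted period map is only a submersion of rank $12$, becoming a local biholomorphism only on $\mathcal M_{\mathrm{sym}}$, but openness is all your argument uses.
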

 
\begin{proof}
By \cref{prop:invariant-period-full-rank}, the invariant period map \(\Phi^+\) is locally open at a general point. In a small neighborhood in \(\mathcal{M}_{\mathrm{sym}}\), we identify the invariant rational cohomology of the fibers with a fixed reference lattice \(\Lambda^+_\QQ\). A nonzero class \(\gamma\in\Lambda^+_\QQ\) is of Hodge type \((2,2)\) if and only if it is orthogonal to the \((3,1)\)-period. This condition cuts out a proper hyperplane section
\[
  D^+_\gamma:=\bigl\{[\omega]\in D^+\bigm|(\gamma,\omega)=0\bigr\}\subsetneq D^+.
\]
Because \(\Phi^+\) is locally open, its image is not contained in \(D^+_\gamma\). Thus, the Noether--Lefschetz locus \((\Phi^+)^{-1}(D^+_\gamma)\) is a proper analytic subvariety of \(\mathcal M_{\mathrm{sym}}\).

Taking the union over all nonzero classes in the countable lattice \(\Lambda^+_\QQ\), the locus of fourfolds that acquire an invariant rational Hodge class is a countable union of proper analytic subvarieties. By definition, a very general \(X \in \mathcal{M}_{\mathrm{sym}}\) avoids this union. Hence
\[
  V^+(X) \cap H^4(X,\QQ)_{\Hdg} = 0.
\]
Thus \(X\) is \(\ZZ/2\)-Hodge-general.
\end{proof}

\begin{corollary}[Hodge classes on the very general symmetric locus]
For a very general symmetric Verra fourfold $X$,
\[
  H^4(X,\QQ)_{\Hdg}
  =
  A_{\mathrm{mid}}\oplus V^-(X).
\]
In particular, the only non-ambient primitive rational Hodge classes are the
anti-invariant classes.
\end{corollary}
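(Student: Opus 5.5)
The plan is to decompose \(H^4(X,\QQ)\) orthogonally and to locate the rational Hodge classes in each summand. By definition of \(V(X)\) as \(A_{\mathrm{mid}}^\perp\), and since the Poincaré pairing is nondegenerate on \(A_{\mathrm{mid}}\), we have \(H^4(X,\QQ)=A_{\mathrm{mid}}\oplus V(X)\). Because \(\epsilon^*\) is defined over \(\QQ\) and preserves \(V(X)\), we further split \(V(X)=V^+(X)\oplus V^-(X)\) over \(\QQ\), as in \cref{cor:Tminus-Hodge}. All three summands are sub-Hodge structures. For \(A_{\mathrm{mid}}\), the classes \(H_1^2,H_1H_2,H_2^2\) are algebraic. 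For \(V(X)\), use \(V(X)=A_{\mathrm{mid}}^\perp\) and the fact that the pairing is a morphism of Hodge structures. For \(V^\pm(X)\), use that \(\epsilon^*\) is a morphism of Hodge structures.

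The key observation is that the Hodge-class functor is compatible with such a splitting. Let \(\gamma\in H^4(X,\QQ)_{\Hdg}\) and write \(\gamma=a+v^++v^-\) according to the decomposition. The projectors onto the summands are defined over \(\QQ\): one is the orthogonal projection onto \(A_{\mathrm{mid}}\), and the others are \(\tfrac12(1\pm\epsilon^*)\) restricted to \(V(X)\). Each projector also preserves the Hodge decomposition. Hence each component of \(\gamma\) is again a rational Hodge class, which gives
\[
  H^4(X,\QQ)_{\Hdg}=(A_{\mathrm{mid}})_{\Hdg}\oplus (V^+(X))_{\Hdg}\oplus (V^-(X))_{\Hdg}.
\]

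Now identify each piece. First, \((A_{\mathrm{mid}})_{\Hdg}=A_{\mathrm{mid}}\) because its generators are algebraic. Second, \((V^-(X))_{\Hdg}=V^-(X)\) by \cref{cor:Tminus-Hodge}. Third, \((V^+(X))_{\Hdg}=0\) for very general \(X\) by \cref{thm:G-Hodge-general-nonempty}. Substituting these gives \(H^4(X,\QQ)_{\Hdg}=A_{\mathrm{mid}}\oplus V^-(X)\). The final clause follows immediately: any rational Hodge class in \(V(X)\) must have zero \(A_{\mathrm{mid}}\)-component and zero \(V^+\)-component, so it lies in \(V^-(X)\).

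There is no real obstacle here. The only point needing care is that the projectors are simultaneously rational and Hodge-compatible. For the orthogonal projection onto \(A_{\mathrm{mid}}\), this uses nondegeneracy of the pairing on \(A_{\mathrm{mid}}\). That nondegeneracy can be checked directly from \(\int_X H_1^2H_2^2=2\), together with \(\int_X H_1^4=\int_X H_2^4=0\), \(\int_X H_1^3H_2=\int_X H_1H_2^3=0\), which hold because \(H_i^3=0\). With these values the Gram matrix in the basis \(H_1^2,H_1H_2,H_2^2\) is antidiagonal with entries \(2\), hence nonsingular.
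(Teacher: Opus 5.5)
Your proposal is correct and is the paper's argument: the paper's proof just cites \cref{cor:Tminus-Hodge} for $V^-(X)$ and \cref{thm:G-Hodge-general-nonempty} for $V^+(X)$. It leaves implicit the rational splitting $H^4(X,\QQ)=A_{\mathrm{mid}}\oplus V^+(X)\oplus V^-(X)$ and the fact that rational Hodge classes decompose along it, and your projector argument, including the check that the Gram matrix on $A_{\mathrm{mid}}$ is nondegenerate, makes those steps explicit.
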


\begin{proof}
By \Cref{cor:Tminus-Hodge}, \(V^-(X)\) consists of rational Hodge
classes. By \Cref{thm:G-Hodge-general-nonempty}, \(V^+(X)\) contains no
nonzero rational Hodge classes.
\end{proof}

\section{The equivariant fourfold obstruction}
\label{sec:fourfold-obstruction}

In this section we verify the equivariance of Iritani's blowup decomposition
and we state the equivariant fourfold irrationality obstruction used in the proof of
Theorem~\ref{thm:conditional-Verra}. Here, $G$ will denote a finite group
acting on a smooth projective variety $Y$, that we will call a $G$-variety.

\begin{lemma}[$G$-stability of evaluated eigenspaces]
\label[lemma]{lem:G-stability}
Let $Y$ be a smooth projective $G$-variety and $\ev$ a $G$-invariant evaluation.
Then $\ev(\kappa_\tau)$ commutes with the $G$-action. In particular every
evaluated generalized eigenspace $E^Y_{\ev,\lambda}$ is $G$-stable.
\end{lemma}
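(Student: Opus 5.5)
The plan is to show that for each \(g\in G\) the pullback \(g^*\) on \(H^*(Y,\QQ)\) intertwines the big quantum product. The relevant identity is
\[
  g^*(\alpha\star_\tau\beta)=g^*\alpha\star_{g^*\tau}g^*\beta,
\]
with Novikov variables transformed by \(Q^d\mapsto Q^{g_*^{-1}d}\) (or \(Q^{g_*d}\), depending on convention). After that, the \(G\)-invariance of \(\ev\) removes both twists. The linear algebra at the end is then formal.

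\textbf{Step 1.} First I establish invariance of Gromov--Witten invariants:
\[
  \langle g^*\gamma_1,\dots,g^*\gamma_n\rangle_{0,n,d}=\langle\gamma_1,\dots,\gamma_n\rangle_{0,n,g_*d}.
\]
The automorphism \(g\) induces an isomorphism \(\overline M_{0,n}(Y,d)\to\overline M_{0,n}(Y,g_*d)\), \(f\mapsto g\circ f\). This isomorphism commutes with the evaluation maps up to \(g\), and it identifies the perfect obstruction theories, since \(g^*T_Y\cong T_Y\). It therefore carries virtual class to virtual class, as noted in the remark preceding \cref{def:rhoG}. Since \(g\) preserves the Poincaré pairing, and also \(c_1(Y)\) (because \(g^*K_Y=K_Y\)) and cohomological degree, the Euler field transforms as
\[
  g^*\Eu_\tau=\Eu_{g^*\tau}.
\]
Combining these gives \(g^*\circ\kappa_\tau=\kappa^{g}_{g^*\tau}\circ g^*\), where the superscript \(g\) indicates that the Novikov variables are relabelled by \(g_*\).

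\textbf{Step 2.} Next I apply \(\ev\). The coefficients of \(\kappa_\tau\) are power series in \(Q^d\) and \(t^i\), and \(\ev\) is a continuous ring homomorphism. The hypotheses \(\ev(Q^{g_*d})=\ev(Q^d)\) and \(\ev(g^*\tau)=\ev(\tau)\) then imply that the relabelled operator evaluates to the same endomorphism, so \(g^*\,\ev(\kappa_\tau)=\ev(\kappa_\tau)\,g^*\).

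There is one subtle point here. The expression \(g^*\tau\) means substituting the \(t^i\) by the coordinates of \(g^*\tau\) in the basis \(\{\alpha_i\}\). Since \(g^*\) preserves \(H^*(Y,\QQ)_{\Hdg}\), this substitution is a \(\QQ\)-linear change of variables, so the formal substitution is legitimate in \(\widehat R(Y,K)\). Continuity of \(\ev\) justifies passing it through the infinite sums.

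\textbf{Step 3.} Finally, since \(g^*\) commutes with \(\ev(\kappa_\tau)-\lambda\), it preserves \(\ker(\ev(\kappa_\tau)-\lambda)^N\) after extending scalars to \(L\). Hence \(E^Y_{\ev,\lambda}\) is \(G\)-stable. Because \(g^*\) also preserves rational Hodge classes, \(\HH^Y_{\ev,\lambda}\) is \(G\)-stable as well, which makes \cref{def:rhoG} well defined.

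\textbf{Main obstacle.} I expect the main difficulty to be Step 1: the equality of virtual classes under the induced isomorphism of moduli stacks. I would argue it by functoriality of the Behrend--Fantechi construction under isomorphisms of stacks with compatible obstruction theories, the obstruction theory being \(R\pi_*f^*T_Y\). The bookkeeping about whether \(g_*\) or \(g_*^{-1}\) appears is harmless, because invariance under every \(g\in G\) is assumed.
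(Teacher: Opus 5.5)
Your proof is correct and follows the same route as the paper. Functoriality of genus-zero Gromov--Witten invariants under $g$, together with $g^*\Eu_\tau=\Eu_{g^*\tau}$, gives $g^*\circ\kappa_\tau=\kappa_{g^*\tau}\circ g^*$; the $G$-invariance of $\ev$ then removes the twist, and $G$-stability of the generalized eigenspaces follows formally. The paper leaves the Novikov relabelling $Q^d\mapsto Q^{g_*d}$ implicit in its formula $g^*(a\star_\tau b)=g^*a\star_{g^*\tau}g^*b$, and your explicit tracking of it, along with the continuity justification, is a useful sharpening.
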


\begin{proof}
By functoriality of genus-zero Gromov--Witten invariants under automorphisms,
$g^*(a\star_\tau b)=g^*a\star_{g^*\tau}g^*b$ for every $g\in G$. Given that 
$g^*c_1(Y)=c_1(Y)$ and $g^*$ respects the cohomology degree, we have $g^*\Eu_\tau=\Eu_{g^*\tau}$.  Hence
$g^*\circ\kappa_\tau=\kappa_{g^*\tau}\circ g^*$. Applying $\ev$ and using its
$G$-invariance, which gives $\ev(\kappa_{g^*\tau})=\ev(\kappa_\tau)$, we obtain
$g^*\circ\ev(\kappa_\tau)=\ev(\kappa_\tau)\circ g^*$. Thus $g$ preserves each
generalized eigenspace $E^Y_{\ev,\lambda}$.
\end{proof}

\begin{theorem}[{$G$-equivariant Iritani blowup decomposition}]
\label{thm:G-equivariant-Iritani}
Let \(Y\) be a smooth projective \(G\)-variety, let
\(Z\subset Y\) be a smooth \(G\)-invariant centre of codimension
\(r\ge 2\), and let \(\widetilde Y=\Bl_ZY\). Then Iritani's blowup
decomposition
\[
  \QDM(\widetilde Y)
  \cong
  \tau^*\QDM(Y)
  \oplus
  \bigoplus_{j=0}^{r-2}\varsigma_j^*\QDM(Z)
\]
is \(G\)-equivariant, after the Laurent--Novikov base change appearing in
\cite[Theorems~1.1 and~5.18]{Iritani2023QuantumCO}.
\end{theorem}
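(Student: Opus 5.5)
The plan is to prove equivariance by naturality: every ingredient of Iritani's isomorphism is built functorially from the geometry of the triple \((Y,Z,\widetilde Y)\), and \(G\) acts compatibly on all of it. First, make the \(G\)-actions explicit. Since \(Z\) is \(G\)-invariant, the universal property of blowing up lifts each \(g\in G\) uniquely to an automorphism \(\tilde g\) of \(\widetilde Y\). This lift preserves the exceptional divisor \(E=\PP(N_{Z/Y})\) and covers the induced action on \(Z\) and on \(N_{Z/Y}\). Consequently \(G\) acts on \(H^*(\widetilde Y)\), \(H^*(Y)\) and \(H^*(Z)\), and on the Novikov rings via \(Q^d\mapsto Q^{g_*d}\). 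Because \(g_*[\ell]=[\ell]\) for the class \(\ell\) of a line in a fibre of \(E\to Z\), the action extends to the Laurent--Novikov base change by letting \(G\) fix \(q'\). The maps \(\tau\) and \(\varsigma_j\) (mirror maps and parameter shifts) are then defined by formulas in Gromov--Witten invariants and characteristic classes of \(N_{Z/Y}\). By the functoriality already used in \cref{lem:G-stability}, these formulas satisfy \(g^*\tau(\theta)=\tau(g^*\theta)\), and similarly for each \(\varsigma_j\).

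Second, upgrade the statement to a \(G\)-equivariant one by checking that Iritani's isomorphism \(\Phi\) intertwines the actions: \(\tilde g^*\circ\Phi=\Phi\circ(g^*\oplus\bigoplus_j g^*|_Z)\). Following \cite[Section~5]{Iritani2023QuantumCO}, \(\Phi\) is built in three steps.

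\begin{enumerate}
\item Define the integral transform \(\mathbb{FT}\) via the equivariant quantum \(D\)-module of the master space \(W=\Bl_{Z\times\{0\}}(Y\times\PP^1)\) with its \(\CC^*\)-action. The \(G\)-action extends to \(W\) and commutes with \(\CC^*\). Equivariant Gromov--Witten invariants of \(W\) are therefore \(G\)-invariant, by the same argument as in the remark on \(G\)-invariant evaluations (\(G\) acts on moduli of stable maps and the obstruction theory is \(G\)-equivariant).
\item Use the fixed-point decomposition of \(W^{\CC^*}\) into \(\widetilde Y\), \(Y\) and \(Z\), whose components are \(G\)-stable. The Laplace/Fourier transforms and the localization maps are given by pushforwards and pullbacks along \(G\)-equivariant inclusions, together with multiplication by \(G\)-invariant Euler classes of normal bundles. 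Hence they commute with \(g^*\).
\item Decompose the \(Z\)-summand into \(r-1\) copies, indexed by the asymptotic sectors (the roots \(\varsigma_j\) in \(q'\)). This indexing depends only on \(q'\) and the rank \(r\), both of which \(G\) fixes, so \(G\) does not permute the summands.
\end{enumerate}

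Finally, uniqueness can serve as a cross-check, or as an alternative route. Iritani's decomposition is characterized (up to the normalizations of \cite[Theorem~5.18]{Iritani2023QuantumCO}) by compatibility with the connection and the pairing. Conjugating \(\Phi\) by \(g\) therefore yields another decomposition with the same properties. If the normalization is canonical, for instance fixed by the leading asymptotics of the Fourier transform, the conjugate must equal \(\Phi\).

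I expect the main obstacle to be step~(iii), together with the analytic and formal convergence issues in the Fourier transform. The danger is that \(g\) could act nontrivially on \(H^*(Z)\) in a way that mixes the \(r-1\) summands, or that the continuous choices (branch of \((q')^{1/(r-1)}\), choice of integration cycles) are not \(G\)-fixed. My first attempt will show that each such choice is fixed by data \(G\) preserves: \(\deg\), \(c_1\), the exceptional line class, and \(q'\). Then the action on each \(\varsigma_j^*\QDM(Z)\) is just the natural \(G\)-action on \(H^*(Z)\), so no permutation of summands occurs.
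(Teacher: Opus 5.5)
Your overall route is the paper's: work on the master space \(W=\Bl_{Z\times 0}(Y\times\PP^1)\), note that \(G\) commutes with the \(\CC^*\)-action and preserves the fixed components \(\widetilde Y\), \(Y\), \(Z\), and conclude that the discrete Fourier transforms commute with \(g^*\). These are built from shift operators and restriction/Kirwan maps. The continuous ones are built from \(\CC^*\)-equivariant characteristic classes of the normal bundle and quantum Riemann--Roch operators, and commute with \(g^*\) for the same reason.

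The gap is that you tacitly assume \(Z\) is connected, or at least that \(G\) stabilises each component. Suppose instead \(Z=Z_1\sqcup\cdots\sqcup Z_p\) with \(G\) permuting the \(Z_i\). Then three of your claims fail: \(g_*[\ell]=[\ell]\), ``\(G\) fixes \(q'\)'', and ``\(G\) does not permute the summands''. There are \(p\) distinct exceptional line classes \(\ell_1,\dots,\ell_p\), distinguished by \(E_i\cdot\ell_k=-\delta_{ik}\), and \(g_*\ell_i=\ell_{g(i)}\). The Laurent--Novikov base change then involves one Laurent variable per component, and \(G\) permutes these variables rather than fixing them. This case cannot be set aside. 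The smooth \(G\)-invariant centres produced by equivariant weak factorization are typically \(G\)-orbits of components, e.g.\ two points swapped by the involution. That is exactly how the theorem is used in the fourfold obstruction.

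What is missing is the paper's treatment of this case. Reduce to \(G\) acting transitively on the components, and replace the single continuous transform by the family \(\widehat{\mathrm{FT}}_{Z_i,j}\). Then observe that for each \(\CC^*\)-character \(\alpha\), the weight pieces \(N_{i,\alpha}\subset N_{Z_i/Y}\) assemble into a \(G\)-equivariant bundle \(\bigsqcup_i N_{i,\alpha}\) over \(Z\). Hence \(\bigoplus_i\widehat{\mathrm{FT}}_{Z_i,j}\) is \(G\)-equivariant for each fixed \(j\).

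In this picture \(G\) does permute summands: it permutes the \(\varsigma_j^*\QDM(Z_i)\) across \(i\) for fixed \(j\), though never across \(j\). To make this consistent, \(G\) must act on the base-changed ring by permuting the Laurent variables, and the roots \((q'_i)^{1/(r-1)}\) must be chosen compatibly with that permutation. The same point reappears downstream: a \(G\)-invariant evaluation must be shifted along all the \(\ell_i\) by a common \(\zeta\), as in the paper's remark.

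Separately, do not rely on your uniqueness cross-check. Compatibility with connection and pairing does not pin down Iritani's isomorphism; for instance, a sign change on one summand preserves both. So conjugation-invariance would need an extra normalisation argument you have not supplied.
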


\begin{proof}
Iritani's proof relies on a geometric construction. Let $W:=\Bl_{Z\times 0}(Y\times \PP^1)$ be the deformation to the normal cone of \(Z\subset Y\), endowed with the \(\CC^*\)-action of weight 1 on the base \(\PP^1\) and trivial on \(Y\). Here, the action of the finite group $G$ on $Y$ (and trivial on $\PP^1$) commutes with the $\CC^*$-action. We have to show that the discrete Fourier transform maps $\widehat{\mathrm{FT}}_{\widetilde Y}$ and $\widehat{\mathrm{FT}}_{Y}$ from \cite[\S4.1]{Iritani2023QuantumCO}, and the continuous Fourier transform map $\widehat{\mathrm{FT}}_{Z,j}$ from \cite[\S4.2]{Iritani2023QuantumCO} are \(G\)-equivariant.

Note that in the $G$-equivariant setting, we can no longer assume that the blowup centre $Z$ is connected. Instead, we denote $Z_1,..., Z_p$ its connected components and assume that the group $G$ permutes all the connected components i.e., for each $1\le i\le p$, there is $g_i\in G$ that maps the generic point of $Z_1$ to the generic point of $Z_i$. Hence, the continous Fourier transform maps are actually $\widehat{\mathrm{FT}}_{Z_i,j}$.

Regarding the discrete Fourier Transform maps, we first note that the classes $Q^d, x,$ and $S$ introduced in \cite[Table~2]{Iritani2023QuantumCO} are \(G\)-invariant. Then the operator 
\(\mathcal{S}^k\) (for $Y$ and $\widehat Y$ in 
\cite[Equation~(3.8)]{Iritani2023QuantumCO}) is $G$-equivariant, because the normal bundle
\(N_{Z/Y}\) is \(G\)-equivariant. Furthermore, the Kirwan map from \cite[Section~3.6]{Iritani2023QuantumCO} is also \(G\)-equivariant, because the restriction to the torus-fixed locus is \(G\)-equivariant. Consequently, the maps $\widehat{\mathrm{FT}}_{\widetilde Y}$ and $\widehat{\mathrm{FT}}_{Y}$ are \(G\)-equivariant. 

The continuous Fourier transforms is introduced directly after
\cite[Equation~(4.9)]{Iritani2023QuantumCO}. It is defined using topological data from the normal bundles
\(N_{Z_i\subset Y}\) of $Z_i$ in $Y$ and the quantum Riemann--Roch operators from
\cite[Equation~(2.24)]{Iritani2023QuantumCO}. All these quantities are built from the
\(\CC^*\)-Chern classes of \(N_{Z_i\subset Y}\). Specifically, this vector bundle decomposes on $Z_i$ as a direct sum $\bigoplus_{\alpha} N_{i, \alpha}$ along characters $\alpha$ of $\mathbb{C}^*$, and for each $\alpha$, the disjoint union of vector bundles $\bigsqcup_{1\le i \le p}N_{i,\alpha}$ is $G$-equivariant (since the actions of $G$ and $\mathbb{C}^*$ commute). Consequently, the continuous Fourier transform $\bigoplus_{1\le i\le p}\widehat{FT}_{Z_i, j}$ is $G$-equivariant.
\end{proof}

\begin{definition}[Evaluated $G$-equivariant fourfold inequality]
\label{def:G-fourfold-ineq}

Let $K$ be a number field and $\mathcal{S}$ be a set of $K$-evaluation maps satisfying the condition in \cite[Definition~27]{guere2026irrationality}.
A smooth projective fourfold $Y$ with an action of the finite group $G$, satisfies the \emph{evaluated $G$-fixed
fourfold inequality} if for any $G$-invariant evaluation
map $\ev$ outside of $\mathcal{S}$ and for any eigenvalue 
$\lambda$, we have
\[
  \nu^Y_{\ev,\lambda}=0
  \qquad\text{or}\qquad
  \rho^{G,Y}_{\ev,\lambda}\ge3.
\]
\end{definition}

\begin{remark}
\label[remark]{rem:gue-prop38} 
\cite[Proposition~38]{guere2026irrationality} and thus \cite[Corollary~41]{guere2026irrationality} still hold in the G-equivariant setting and for our criterion.
This follows from \cref{thm:G-equivariant-Iritani} above and from the fact 
that our criterion is preserved under taking  direct sums.
Note that there is a technical detail coming from the proof
of \cite[Lemma~33]{guere2026irrationality} which needs to be addressed. We have to show that when we shift the evaluation map ev along the class \(l\) of a line in a fibre of the exceptional divisor $E=\mathbb P_Z(N_{Z/Y})\longrightarrow Z$ by an element $\zeta\in D(0, 1)_{F_K}$, the new evaluation map is still $G$-invariant. First, if the blowup centre is connected, then it is $G$-invariant; thus the class $l$ is $G$-invariant and the result follows. Second, we assume that the blowup centre $Z$ is the disjoint union of connected components $Z_1, ..., Z_p$. We denote by $l_1, ..., l_p$ the corresponding classes of lines as above. As in the previous proof, we may assume that $G$ permutes all the connected components. Since the evaluation map ev is $G$-invariant, then we have $\ev(Q^{l_{i}})=\ev(Q^{g_{i\ast}(l_{i})})=\ev(Q^{l_{1}})$ for each ${1\le i\le p}$. Consequently, it is necessary and sufficient in this case to shift the evaluation map ev along the classes $l_1, ..., l_p$ by the same element $\zeta\in D(0, 1)_{F_K}$. 
\end{remark}

\begin{proposition}[Equivariant rank bound for surface centres]
\label[proposition]{prop:surface-bound}
Let $S$ be a smooth projective surface with an action of a finite group $G$, and
let $\ev$ be a $G$-invariant evaluation. If $\nu^S_{\ev,\lambda}\ne0$, then
$\rho^{G,S}_{\ev,\lambda}\ge3$.
\end{proposition}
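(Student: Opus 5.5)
The plan is to use three structural facts about the evaluated quantum product on a surface. First, the generalized eigenspaces of \(\ev(\kappa_\tau)\) are ideals of the evaluated quantum ring. Second, they are pairwise orthogonal for the Poincaré pairing. Third, classes of Hochschild degree \(\pm2\) (that is, \(H^{2,0}\) and \(H^{0,2}\)) interact with quantum products only through the classical cup product. From these I will produce three linearly independent \(G\)-invariant rational Hodge classes in \(\HH^S_{\ev,\lambda}\). Throughout, \(G\)-stability of \(E^S_{\ev,\lambda}\) is supplied by \cref{lem:G-stability}, and the \(G\)-action preserves both the Poincaré pairing and the Hodge decomposition.

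\emph{Step 1 (idempotent).} Since \(\kappa_\tau\) is quantum multiplication by \(\Eu_\tau\), it commutes with every quantum multiplication operator. Hence each \(E^S_{\ev,\lambda}\) is an ideal, and these ideals are mutually orthogonal by the Frobenius property. The projector onto \(E^S_{\ev,\lambda}\) is a polynomial \(P_\lambda(\ev(\kappa_\tau))\). Since \(\ev(\kappa_\tau)\) preserves rational Hodge classes (\cite[Proposition~14]{guere2026irrationality}) and commutes with \(G\), the class \(e_\lambda:=P_\lambda(\ev(\kappa_\tau))(1)\) is a \(G\)-invariant Hodge class in \(E^S_{\ev,\lambda}\). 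It is nonzero because \(\nu^S_{\ev,\lambda}\neq0\) gives \(E^S_{\ev,\lambda}\neq0\), and it acts as the unit of that ideal.

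\emph{Step 2 (point class).} Pick a nonzero \(\omega\in (E^S_{\ev,\lambda}\otimes\CC)\cap H^{2,0}\). Every curve class on \(S\) is of type \((1,1)\), so \(\int_d\omega=0\) for all \(d\in\NE(S)\). By the divisor axiom and deformation invariance, all Gromov--Witten invariants with an \(H^{2,0}\) or \(H^{0,2}\) insertion and \(d\ne0\) vanish. Therefore \(\omega\star_\tau x=\omega\cup x\) for these insertions. The Poincaré pairing is nondegenerate on \(E^S_{\ev,\lambda}\) (by orthogonality of distinct eigenspaces) and preserves Hochschild degree. So \(\omega\) pairs nontrivially with some \(\omega'\in E^S_{\ev,\lambda}\otimes\CC\) of Hochschild degree \(-2\), which means \(\omega'\) has a nonzero \(H^{0,2}\)-component. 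The ideal property then gives \(\omega\star\omega'=\omega\cup\omega'=c\cdot[\mathrm{pt}]\) with \(c\ne0\). Hence the point class \([\mathrm{pt}]\), a \(G\)-invariant rational Hodge class, lies in \(\HH^S_{\ev,\lambda}\). It is independent of \(e_\lambda\): we have \(e_\lambda\star\omega=\omega\), and \([\mathrm{pt}]\star\omega=0\) by the cup-product formula in degree \(>4\).

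\emph{Step 3 (third class — main obstacle).} The space \(W:=(\HH^S_{\ev,\lambda})^G\) carries a nondegenerate pairing. This holds because the pairing on \(E^S_{\ev,\lambda}\) is nondegenerate and respects the Hochschild grading, and averaging over \(G\) shows it stays nondegenerate on the \(G\)-invariant Hochschild-degree-zero part. I would then argue that \(\langle e_\lambda,[\mathrm{pt}]\rangle\) cannot be a nondegenerate plane compatible with the ring structure. The point class is isotropic and squares to zero, \([\mathrm{pt}]\star[\mathrm{pt}]=0\) for degree reasons after evaluation. Meanwhile \(e_\lambda\star[\mathrm{pt}]=[\mathrm{pt}]\), and \(e_\lambda\) must pair to \(1\) with \([\mathrm{pt}]\) only through its \(H^0\)-component. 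I would then consider \(\ev(\kappa_\tau)[\mathrm{pt}]=c_1\star[\mathrm{pt}]\) together with the \(G\)-average of \(\ev(\kappa_\tau)e_\lambda\). These are \(G\)-invariant Hodge classes in \(E^S_{\ev,\lambda}\). The goal is to show that one of them has a nonzero \(H^2\)-component, or that nilpotency of \(\kappa-\lambda\) on the \(2\)-plane contradicts \(\omega\star\omega'=c[\mathrm{pt}]\) together with the associativity identity \(\langle\omega\star\omega',e_\lambda\rangle=\langle\omega,\omega'\rangle\). If this degree-counting argument is not enough on its own, I would fall back on Guéré's non-equivariant surface bound and check that each class it produces can be replaced by its \(G\)-average without the three becoming dependent. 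Keeping these classes independent after averaging is the step I expect to be hardest.
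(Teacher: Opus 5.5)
Your Steps 1 and 2 are essentially sound. In Step 2 the divisor axiom alone suffices: $\omega\in H^{2,0}\subset H^2(S,\CC)$ integrates to zero on every curve class, so $\omega\star_\tau x=\omega\cup x$. These steps give two independent $G$-fixed rational Hodge classes $e_\lambda$ and $[\mathrm{pt}]$ in $E^S_{\ev,\lambda}$, hence $\rho^{G,S}_{\ev,\lambda}\ge2$. The statement needs a third class, and Step 3 does not produce one. None of the strategies you sketch can work:

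\begin{itemize}
\item \textbf{Nondegeneracy gives no contradiction.} Your own computation shows that the $H^0$-component of $e_\lambda$ is $1$, because $e_\lambda\star\omega=e_\lambda\cup\omega$ and the $H^2$-part of $e_\lambda$ is of type $(1,1)$. So the Gram matrix of $\langle e_\lambda,[\mathrm{pt}]\rangle$ is $\bigl(\begin{smallmatrix}\ast&1\\1&0\end{smallmatrix}\bigr)$. This is a hyperbolic plane, which is perfectly nondegenerate.
\item \textbf{Ring-theoretic candidates need not be new.} Classes such as $\ev(\kappa_\tau)[\mathrm{pt}]$ or $\ev(\kappa_\tau)e_\lambda$ can stay in the old span. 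For a K3 surface the genus-zero product is classical and $c_1=0$, so $\ev(\kappa_\tau)$ is cup product with a class in $\langle 1,[\mathrm{pt}]\rangle$. Then $e_\lambda=1$, and everything you generate stays in the span of $1$ and $[\mathrm{pt}]$.
\end{itemize}

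The abstract data you use are all consistent with a fake ``surface'' $H^0\oplus H^{2,0}\oplus H^{0,2}\oplus H^4$ that has no $(1,1)$-classes at all. Here the data are the Frobenius structure, the Hochschild grading, the pairing and the $G$-action. So the third class must come from geometric input: the existence of an ample class, together with control over which eigenspace it lands in.

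The paper supplies exactly this input through the minimal model.
\begin{itemize}
\item Since $h^{2,0}(S)\neq0$, $S$ has Kodaira dimension $\ge0$ and hence a unique minimal model $\Sigma$. Uniqueness implies that $G$ acts on $\Sigma$ and that $S\to\Sigma$ is a $G$-equivariant blowup at $G$-orbits of points.
\item On $\Sigma$ the evaluated Euler operator has a single generalized eigenspace, namely all of $H^*(\Sigma)$. That eigenspace therefore contains $1$, $[\mathrm{pt}]$ and the averaged ample class $L_G=\sum_g g^*L$.
\item $L_G$ is $G$-invariant and ample, hence nonzero, and it is independent of $1$ and $[\mathrm{pt}]$ for degree reasons.
\item By the $G$-equivariant Iritani decomposition (\cref{thm:G-equivariant-Iritani}), each point blowup only adds $\QDM(\mathrm{pt})$ summands with $\nu=0$. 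So the eigenspace of $S$ with $\nu\neq0$ inherits $\rho^G\ge3$.
\end{itemize}

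Averaging an \emph{ample} class is also what answers your worry, in the fallback plan, that $G$-averages might collapse: positivity guarantees $L_G\neq0$. That trick only helps once you know the relevant eigenspace contains all of $H^2$ of the minimal model, which your intrinsic approach does not provide.
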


\begin{proof}
Let $\Sigma$ denote the minimal model of $S$ when $h^{2,0}(S)\ne 0$. This surface $\Sigma$ has only one generalized eigenspace which must coincide with total cohomology. Choose an ample class $L$ on $\Sigma$. The averaged class $L_G:=\sum_{g\in G}g^*L$ is \(G\)-invariant and ample. Hence $1, L_G, [\mathrm{pt}]$ gives three \(G\)-fixed rational Hodge classes. Thus, the generalized eigenspace has $\nu^\Sigma_{\ev,\lambda}\ne0$ and $\rho^{G,\Sigma}_{\ev,\lambda}\ge 3$. Because $S$ is obtained from $\Sigma$ by blowing up points, it introduces no new generalized eigenspace with \(\nu^S_{\ev,\lambda}\ne 0\).
\end{proof}

\begin{theorem}[Evaluated $G$-equivariant fourfold obstruction]
\label{thm:G-fourfold-obstruction} 
Let $Y$ be a smooth projective fourfold with an action of a finite group $G$. If $Y$ is $G$-birational to $\PP^4$ with some regular $G$-action on $\PP^4$, then
$Y$ satisfies the evaluated $G$-equivariant fourfold inequality.
\end{theorem}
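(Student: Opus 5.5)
The plan follows the standard atoms argument: weak factorization combined with the blowup formula. Suppose $\varphi\colon Y\dashrightarrow\PP^4$ is a $G$-birational map, where $G$ acts regularly on $\PP^4$. We will use the $G$-equivariant weak factorization theorem (Abramovich--Karu--Matsuki--W{\l}odarczyk, which holds equivariantly for finite groups acting on smooth projective varieties). It factors $\varphi$ into a zigzag
\[
Y=Y_0 \dashleftarrow\dashrightarrow Y_1 \dashleftarrow\dashrightarrow \cdots \dashleftarrow\dashrightarrow Y_n=\PP^4,
\]
where each step is a blowup or blowdown along a smooth $G$-invariant centre. Such a centre may be disconnected, but $G$ permutes its components, so after grouping components into $G$-orbits each step falls within the setting of \cref{thm:G-equivariant-Iritani} and \cref{rem:gue-prop38}. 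Every $Y_k$ is a smooth projective fourfold with a regular $G$-action, and $Y_n=\PP^4$ carries the given regular action. This regularity is exactly why the remark after the main theorem insists that $\iota$ be regular.

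Next we check the inequality of \cref{def:G-fourfold-ineq} on the endpoint $\PP^4$, and for the centres of the blowups. For $\PP^4$ we have $H^{(2)}(\PP^4)=0$, so $\nu^{\PP^4}_{\ev,\lambda}=0$ for every $\lambda$ and every evaluation, and the inequality holds trivially. The centres $Z$ are split by dimension.
\begin{itemize}
\item Points and curves have no Hochschild degree two cohomology, so $\nu=0$ for them.
\item Smooth $G$-invariant surfaces are handled by \cref{prop:surface-bound}: whenever $\nu^Z_{\ev,\lambda}\ne0$ we get $\rho^{G,Z}_{\ev,\lambda}\ge3$. The group acting on a component is its stabilizer, and averaging over $G$ sends it to the orbit.
\item Divisor centres cannot occur, since blowing up in codimension one is trivial.
\end{itemize}
For an orbit of surface components, $G$-fixed Hodge classes on the disjoint union correspond to fixed classes of the stabilizer on one component. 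So the bound $\ge3$ transfers once the evaluation restricted to a single component is invariant under the stabilizer.

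The core step is propagation along each blowup $\widetilde W=\Bl_Z W$. By \cref{thm:G-equivariant-Iritani}, after the Laurent--Novikov base change the evaluated quantum $D$-module of $\widetilde W$ decomposes $G$-equivariantly as that of $W$ plus $r-1$ copies of that of $Z$. Hence each evaluated generalized eigenspace of $\widetilde W$, together with its Hodge-class part and its Hochschild degree two part, splits as a $G$-equivariant direct sum of the corresponding pieces. We then follow \cite[Proposition~38, Corollary~41]{guere2026irrationality} and propagate step by step. Each eigenspace of $Y$ sits inside $\widetilde W$, and through the factorization it either matches an eigenspace of $\PP^4$ or arises from a blowup centre. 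Since $\nu$ and $\rho^G$ are additive under equivariant direct sums, and taking $G$-invariants commutes with these sums, the property "$\nu=0$ or $\rho^G\ge3$" is stable under the sums and summand extractions that occur. The rank bound holds because a nonzero summand with $\nu\ne0$ already contributes at least $3$ to $\rho^G$. An evaluation for $Y$ is transported along the zigzag with the $q'$ and exceptional-line shifts described in \cref{rem:gue-prop38}, which keeps it $G$-invariant. The excluded set $\mathcal S$ absorbs the finitely many bad evaluations where eigenvalues collide or the base change degenerates.

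I expect the main obstacle to be making this "summand extraction" rigorous. A generalized eigenspace of $Y$ with $\nu\ne0$ must be matched, after evaluation, with a piece coming from some surface centre. This requires the eigenvalue-matching argument of Gu\'er\'e's Lemma~33, carried out compatibly with the $G$-action, and the invariant sub-Hodge classes must be exactly what gets transported. To get there I would rely on the $G$-stability from \cref{lem:G-stability} together with the fact that the isomorphisms preserve the rational Hodge structure, and so commute with projection onto $G$-invariants.
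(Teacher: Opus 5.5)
Your proposal is correct and follows essentially the same route as the paper's proof. Both use $G$-equivariant weak factorization, the vanishing of $\nu$ for $\PP^4$, points and curves, \cref{prop:surface-bound} for surface centres, and additivity of $\nu$ and $\rho^G$ via \cref{thm:G-equivariant-Iritani} and \cref{rem:gue-prop38}; your treatment of disconnected centres through $G$-orbits and stabilizers, and your flagging of the eigenvalue-separation step (Gu\'er\'e's Lemma~33) needed to extract summands in the blowdown direction, make explicit what the paper leaves to that remark.
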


\begin{proof}
By equivariant weak factorization~\citep{Abramovich1999TorificationAF}, a $G$-birational map $Y\dashrightarrow\PP^4$
factors into blowups and blowdowns along smooth $G$-invariant centres. As the
intermediate varieties are fourfolds, these centres have dimension $0$, $1$, or
$2$. $\PP^4$, points, and curves carry no Hochschild-degree-two cohomology,
and surface centres with $h^{2,0}=0$ contribute none either, so every nonzero
Hochschild-degree-two contribution comes from a surface centre with
$h^{2,0}\ne0$ (addressed by ~\cref{prop:surface-bound}). By \cref{thm:G-equivariant-Iritani} and \cref{rem:gue-prop38},
$\rho^G$ and $\nu$ are additive under each $G$-equivariant blowup and blowdown,
so every eigenspace contribution with $\nu\ne0$ has $\rho^G\ge3$. Hence $Y$
satisfies the evaluated $G$-fixed fourfold inequality.
\end{proof}

We now apply this obstruction to \(\mathbb Z/2\)-rationality in the sense of
Definition~\ref{def:z2-rational}.

\begin{corollary}
\label[corollary]{cor:z2-irr}
    If $Y$ is a smooth projective fourfold with an involution $\iota:Y\to Y$, and $Y$ does not satisfy the evaluated $\ZZ/2$-equivariant fourfold inequality, then the pair ($Y$, $\iota$) is $\ZZ/2$-irrational. 
\end{corollary}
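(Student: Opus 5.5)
This corollary is the contrapositive of \cref{thm:G-fourfold-obstruction}, applied with $G=\ZZ/2=\langle\iota\rangle$. Suppose, for contradiction, that $(Y,\iota)$ is $\ZZ/2$-rational in the sense of \cref{def:z2-rational}. Then there is a birational map $\varphi:Y\dashrightarrow\PP^4$ and a regular involution $\sigma\in\Aut(\PP^4)$ with $\sigma=\varphi\circ\iota\circ\varphi^{-1}$ on a dense open set.

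The first step is to recast this as a $G$-equivariant birational map. We let $G$ act on $Y$ through $\iota$ and on $\PP^4$ through $\sigma$. The relation $\sigma\circ\varphi=\varphi\circ\iota$ holds on a dense open subset, so it holds as an identity of rational maps. Hence $\varphi$ is a $G$-birational map between smooth projective $G$-varieties, and the action on $\PP^4$ is regular, as \cref{thm:G-fourfold-obstruction} requires.

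There is one degenerate case to address. If $\sigma=\mathrm{id}$, then $\iota=\mathrm{id}$ on a dense open set, so $\iota=\mathrm{id}$. Then $G$ acts through a possibly non-faithful action. This does not matter: \cref{thm:G-fourfold-obstruction} is stated for an arbitrary action of a finite group, and equivariant weak factorization applies equally in that case. So no case split is really needed, and we only note that faithfulness is never used.

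Applying \cref{thm:G-fourfold-obstruction} then shows that $Y$ satisfies the evaluated $\ZZ/2$-equivariant fourfold inequality of \cref{def:G-fourfold-ineq}. This contradicts the hypothesis, so $(Y,\iota)$ is $\ZZ/2$-irrational.

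The only point needing care, though it is minor, is that the $G$-invariance conditions on evaluations, and therefore the quantities $\rho^{G,Y}_{\ev,\lambda}$, are formed with respect to the same $G$-action on $Y$ that the birational map intertwines. That action is precisely the one induced by $\iota$, so the inequality obtained from the theorem is exactly the one assumed to fail.
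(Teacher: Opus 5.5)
Your proposal is correct and matches the paper's reasoning. The paper gives no separate proof: it treats the corollary as the contrapositive of \cref{thm:G-fourfold-obstruction}, with $G=\ZZ/2$ acting by $\iota$ on $Y$ and by $\sigma$ on $\PP^4$, which is exactly your argument. Your extra remarks (that $\sigma\circ\varphi=\varphi\circ\iota$ holds as rational maps, and that faithfulness of the action is never used) are harmless bookkeeping the paper leaves implicit.
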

\section{Verra Small Quantum Cohomology}
\label{sec:ambient}
In this section we provide the small quantum multiplication matrix restricted to the ambient part
for the Verra fourfold. Rather than computing it directly using mirror theorems, as in ~\cite{Katzarkov2025BirationalIF}, or enumerative geometry, as in~\cite{Benedetti2026QuantumCA, Benedetti26Verra}, we take a short cut, deriving it via the quantum differential operator. The full method and computation are provided in Appendix~\ref{app:comp}.

Let \(H:=H_1+H_2\), and let \(d_1,d_2\in NE(X)\) be the bidegree-one line
classes defined in \cref{ssec:amb}. The factor-swap involution exchanges \(p_1\) and \(p_2\), and therefore exchanges
\(d_1\) and \(d_2\).  Write
\[
  Q_1:=Q^{d_1},
  \qquad
  Q_2:=Q^{d_2}.
\]
Since \(c_1(X)\cdot d_i=2\), the Novikov variables \(Q_i\) have degree \(4\). We restrict to the \(G\)-fixed Novikov diagonal
\[
  Q_1=Q_2=q.
\]
This is the natural locus in this equivariant setting as every \(G\)-invariant evaluation must identify \(Q_1\) and \(Q_2\).

We now choose the specific evaluations for \(X\). Let
\[
  F:=\QQ((a^\QQ)),
  \qquad
  S^*:=F[b^{\pm1}],
  \qquad
  \deg b=1.
\]
For \(0\ne\zeta\in F\) with \(|\zeta|<1\), define the diagonal small
evaluation
\[
  \ev_\zeta(Q_1)=\ev_\zeta(Q_2)=\zeta b^4,
  \qquad
  \ev_\zeta(t^i)=0
  \quad\text{for all Hodge variables }t^i.
\]
This is degree-preserving and \(G\)-invariant, since \(\deg q=4\)
and the exchanged variables \(Q_1,Q_2\) are sent to the same element.

As discussed in~\cref{sec:verra}, the involution decomposes the ambient cohomology $A$ as $A=A^+\oplus A^-$. It is closed under quantum multiplication by \(H\)
\cite[Corollary~2.5]{Iritani_2011}. Hence, on the diagonal we obtain an endomorphism
\[
  m_H(q):=H\star_q(-)|_A .
\]
Since \(H\) is invariant under the factor-swap, \(m_H(q)\) preserves \(A^+\) and
\(A^-\). We write \(M_H(q)\) as the matrix of \(m_H(q)\) in the ordered basis for this decomposition. 

We compute $M_H(q)$ using the quantum period \(G_X(t)\) of \(X\), which is a
generating function for one-point genus-zero descendant Gromov--Witten
invariants along the anticanonical direction. We refer the reader to~\citep{coates2014mirror} for a full discussion of quantum periods. 

For the Verra fourfold deformation family, the quantum period was computed by
Coates--Galkin--Kasprzyk--Strangeway~\citep{Coates2014QuantumPF} as
\begin{equation}\label{eq:qp-mw4}
G_X(t)
=
\sum_{\ell=0}^{\infty}
\sum_{m=\ell}^{\infty}
\frac{(2m)!}{(\ell!)^3m!((m-\ell)!)^3}
t^{2m}.
\end{equation}
Only even powers of \(t\) occur. Since the primitive curve classes on the
anticanonical diagonal have anticanonical degree \(2\), we set $q=t^2$, and write the same period in the variable \(q\) as
\[
G_X(q)
=
1+4q+15q^2+\frac{280}{9}q^3+\frac{6055}{144}q^4+\cdots .
\]

The small quantum connection for multiplication by \(H\) gives the first-order
system
\[
  D y=M_H(q)y,
  \qquad
  D=q\frac{d}{dq}.
\]
 
The small \(J\)-function is a solution of this quantum connection, and the
quantum period \(G_X(q)\) is the scalar component selected by the unit class; see
\cite[Section~2]{Iritani_2011}. Applying the cyclic-vector algorithm to this same
first-order system eliminates the other components and gives a scalar differential
operator
\[
L_X = \sum_{k=0}^{r} p_k(q)\,D^k,
\]
where \(r\) is the order of the operator and the \(p_k(q)\) are polynomials.
Therefore the same scalar component \(G_X(q)\) is annihilated by \(L_X\)
\begin{equation}\label{eq:qde}
L_X G_X(q) = 0, \qquad G_X(0) = 1.
\end{equation}

Since \(1\in A^+\) and \(m_H(q)\) preserves \(A^+\), the cyclic subspace generated
by the unit class lies entirely in \(A^+\). Hence the scalar operator \(L_X\) obtained
from the cyclic-vector algorithm depends only on \(M_+(q)\). We can therefore
match coefficients against the known expansion of \(G_X(q)\) to obtain equations
for the Gromov--Witten invariants appearing in \(M_+(q)\). The anti-invariant block
\(M_-(q)\) is computed separately in \cref{app:comp}. The outcome is the following.

\begin{proposition}[Ambient quantum cohomology]
\label[proposition]{prop:ambient-diagonal-matrix}
On the diagonal \(Q_1=Q_2=q\), the matrix \(M_H(q)\) of the ambient operator $m_H:=H\star(-)|_A$
decomposes as $M_H(q)=M_+(q)\oplus M_-(q)$ with respect to \(A=A^+\oplus A^-\).
In the ordered basis
\[
  \mathcal B_+
  =
  \bigl(
  1,\,
  H_1+H_2,\,
  H_1^2+H_2^2,\,
  H_1H_2,\,
  H_1^2H_2+H_1H_2^2,\,
  H_1^2H_2^2
  \bigr)
\]
of \(A^+\), we have
\[
  M_+(q)
  =
  \begin{pmatrix}
    0     & 4q    & 0   & 0   & 32q^2  & 0     \\
    1     & 0     & 6q  & 2q  & 0      & 16q^2 \\
    0     & 1     & 0   & 0   & 6q     & 0     \\
    0     & 2     & 0   & 0   & 4q     & 0     \\
    0     & 0     & 1   & 1   & 0      & 2q    \\
    0     & 0     & 0   & 0   & 2      & 0
  \end{pmatrix}.
\]
and similarly 
\[
  \mathcal B_-
  =
  \bigl(
  H_1-H_2,\,
  H_1^2-H_2^2,\,
  H_1^2H_2-H_1H_2^2
  \bigr)
\]
of \(A^-\), we have
\[
  M_-(q)
  =
  \begin{pmatrix}
    0 & 2q & 0  \\
    1 & 0  & 2q \\
    0 & 1  & 0
  \end{pmatrix}.
\]
The matrices act on column vectors in the displayed ordered bases.
\end{proposition}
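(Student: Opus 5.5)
The plan is to determine the small quantum product by $H$ on the ambient ring $A$ from a small set of two-point Gromov--Witten invariants, and to fix those invariants using the quantum period \eqref{eq:qp-mw4}.

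\textbf{Step 1 (shape of the matrix).} I first constrain the matrix by degree, symmetry and duality.
\begin{itemize}
\item Because $\deg q=4$ and $m_H$ raises degree by $2$, an entry sending a class of degree $2a$ to a class of degree $2b$ has the form $c\,q^k$ with $2b=2a+2-4k$. This forces the checkerboard pattern displayed, with classical cup-product entries for $k=0$ and quantum corrections only for $k=1,2$.
\item The classical entries come from $A=\QQ[H_1,H_2]/(H_1^3,H_2^3)$ directly, for example $H(H_1^2+H_2^2)=H_1^2H_2+H_1H_2^2$ and $H\cdot H_1H_2$ equal to the same class.
\item The block decomposition follows from $\epsilon$-invariance of $H$ and the $G$-equivariance of $\star$ (\cref{lem:G-stability}).
\item Quantum entries are expressed through three-point invariants $\langle H,\alpha,\beta\rangle_{d}$ with $d=d_1,d_2,d_1+d_2,2d_1,2d_2$. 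By the divisor axiom these reduce to two-point invariants $(H\cdot d)\langle\alpha,\beta\rangle_d$.
\item I then convert invariants into matrix entries using the Poincaré pairing on $A$, determined by $\int_X H_1^2H_2^2=2$. Frobenius symmetry of $\star$ with respect to this pairing halves the number of unknowns. For instance the entry $32q^2$ in row $1$ must match $16q^2$ in row $2$ up to pairing normalisation, and $4q$ must match $2q$.
\end{itemize}

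\textbf{Step 2 (invariant block).} The remaining unknowns in $M_+(q)$ are a few numbers: $\langle \mathrm{pt}\rangle$-type invariants in degree $d_i$ and $2d_i$, $d_1+d_2$, and line-counting invariants such as $\langle H_1^2,\mathrm{pt}\rangle_{d_1}$. I apply the cyclic-vector algorithm to $Dy=M_+(q)y$ starting from the unit $1$; this yields an order-$6$ operator $L_X$ with coefficients polynomial in $q$ and in the unknowns. Requiring $L_XG_X=0$ and matching the coefficients $4,15,280/9,6055/144,\dots$ then gives polynomial equations for the unknowns. A cross-check is available: the regularized period's first coefficient $4$ should equal the constant term of $\mathrm{tr}$-type data, namely $\langle\mathrm{pt}\rangle$ over degree $2$ classes, which is the $(1,2)$ entry $4q$.

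\textbf{Step 3 (anti-invariant block).} $M_-(q)$ cannot be reached from the unit, so it is invisible to $G_X$. Here I would use the two-parameter structure before restricting to the diagonal.
\begin{itemize}
\item Relations in the ambient quantum ring (the quantum deformations of $H_i^3=0$) are obtained from the two-variable $I$-function of \cite{Coates2014QuantumPF} (toric hypersurface quantum Lefschetz). Alternatively, the WDVV associativity equations for $H_1\star H_2$ apply, since the degree-$d_1$ invariants are already determined by the $+$ block together with the $\epsilon$ swap $d_1\leftrightarrow d_2$.
\item The $(1,2)$ entry $2q$ is the difference of $Q_1$ and $Q_2$ contributions, $\langle H_1-H_2,\cdot\rangle$, which I obtain by splitting the $4q$ of the $+$ block into $2Q_1+2Q_2$.
\end{itemize}

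\textbf{Main obstacle.} I expect the hardest part to be the uniqueness of the solution in Step 2: the matching equations are nonlinear in the unknowns, and I must verify enough period coefficients to pin them down. The coefficient $h^{3,1}$ transcendental part does not mix with $A$, by \cite[Corollary~2.5]{Iritani_2011}, which justifies working on $A$ alone. The anti-invariant block in Step 3 also relies on WDVV consistency rather than on the period.
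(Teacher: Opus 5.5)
\textbf{Step 2 cannot be closed by checking more period coefficients.} Your Steps 1--2 follow the paper's route: the degree/Frobenius ansatz, the cyclic vector from the unit, and matching $L_XG_X=0$. The problem is that the non-uniqueness you worry about is structural. Let $t,u$ be the coefficients of $H$ in $H\star(H_1^2+H_2^2)$ and in $H\star H_1H_2$ on the diagonal, and put $w:=H_1^2+H_2^2-H_1H_2$.
\begin{itemize}
\item Classically $H\cdot w=0$ and $w\perp H^2$.
\item Inside the ansatz the only couplings of $w$ are $H\star w=(t-u)q\,H$ and the $w$-coefficient $\tfrac{2}{3}(t-u)q$ of $H\star(H_1^2H_2+H_1H_2^2)$.
\item So the constant isometry $w\mapsto -w$, which fixes $1,H,H^2,H_1^2H_2+H_1H_2^2,H_1^2H_2^2$, conjugates the ansatz at $(t,u)$ into the ansatz at $(t',u')$ with $t'+2u'=t+2u$ and $t'-u'=-(t-u)$. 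It also fixes the cyclic vector.
\end{itemize}
Hence $(t,u)=(6,2)$ and $(\tfrac{2}{3},\tfrac{14}{3})$ give the same $L_X$ and the same $G_X$ to all orders; the period only sees $(t-u)^2$. You need one extra input, as the paper does with Gromov--Witten invariants. For instance, $\langle H_1H_2,H_1^2H_2\rangle_{d_1}=4$ and $\langle H_1H_2,H_1H_2^2\rangle_{d_1}=0$ give $u=2$. Alternatively, integrality of genus-zero invariants in the indecomposable class $d_1$ suffices, since $u=\tfrac12\langle H_1H_2,H_1^2H_2+H_1H_2^2\rangle_{d_1}$ cannot equal $\tfrac{14}{3}$.

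\textbf{Step 3 uses the wrong invariant and aims at the wrong sign.} The $(1,2)$ entry of $M_-$ is the coefficient of $a_1=H_1-H_2$ in $H\star a_2$. It equals $-\tfrac14\sum_iQ_i\langle a_2,a_3\rangle_{d_i}$, where $\langle a_2,a_3\rangle_{d_1}=\langle H_1^2,H_1^2H_2\rangle_{d_1}-\langle H_1^2,H_1H_2^2\rangle_{d_1}$; the $H_2^2$-terms vanish because $d_1$-lines lie in fibres of $p_2$.
\begin{itemize}
\item This antisymmetric combination is invisible to the diagonal $+$ block, which only sees the sum $2t=12$.
\item It is unrelated to the point count $\langle[\mathrm{pt}]\rangle_{d_i}=2$ behind the entry $4q$. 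Splitting $4q=2Q_1+2Q_2$ only produces the degree-zero term $2(Q_1-Q_2)$ of $H\star(H_1-H_2)$, which vanishes on the diagonal.
\end{itemize}
So your second bullet fails, and so does the claim that the $d_1$-invariants are already fixed by the $+$ block and the swap.

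Your first bullet is a sound route, and it differs from the paper, which counts lines on a fibre quadric. Carry it out. The function
\[
  I=\sum_{l,m\ge0}Q_1^{l+H_1/z}Q_2^{m+H_2/z}\,
  \frac{\prod_{k=1}^{2l+2m}(2H+kz)}{\prod_{k=1}^{l}(H_1+kz)^3\prod_{k=1}^{m}(H_2+kz)^3\prod_{k=1}^{l+m}(H+kz)}
\]
is $1+O(z^{-2})$, so it equals $J$; it also recovers $G_X$ on the diagonal. It satisfies $(zD_1)^3I=Q_1\bigl(4z(D_1+D_2)+2z\bigr)I$ with $D_i=Q_i\partial_{Q_i}$, whence $H_1^{\star3}=4Q_1H$.
\begin{itemize}
\item Since $H_1\star H_1=H_1^2+2Q_1$, this forces $H_1\star H_1^2=Q_1(2H_1+4H_2)$.
\item Since $d_2$-lines lie in fibres of $p_1$, you have $H_2\star H_1^2=H_1^2H_2$.
\item Hence on the diagonal $H\star a_2=a_3-2q\,a_1$, and by Frobenius symmetry $H\star a_3=-2q\,a_2$.
\end{itemize}
So the off-diagonal entries of $M_-$ are $-2q$, i.e.\ $\langle a_2,a_3\rangle_{d_1}=8-4=+4$, not the paper's $N=-4$. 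Direct line counts agree: $\overline{x_0x_1}$ is tangent to $C_y$ for $4$ points $y$ of a line, with $2$ lifts each, and there are $2$ points over $(x_0,y_1)$ with $2$ lines through each. This is also consistent with the spectrum of $\QQ[H_1,H_2]/(H_1^3-4qH,\,H_2^3-4qH)$, which gives $\lambda(\lambda^2+16q)$ for $c_1\star$ on $A^-$ and reproduces the paper's $A^+$ factor exactly. So the displayed $M_-(q)$ appears to carry a sign error, which is harmless for the zero-eigenspace ranks used later. Do not try to reproduce $+2q$.
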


\begin{proposition}[Characteristic polynomial of the ambient quantum operator]
\label[proposition]{prop:ambient-zero-eigenspace}
Let
\[
  \kappa=c_1(X)\star(-)=2m_H .
\]
Then the characteristic polynomial of \(\kappa\) on the ambient cohomology
along the anticanonical diagonal is
\[
  \chi_\kappa(\lambda)
  =
  \lambda^3(\lambda^2-16q)(\lambda^2-128q)(\lambda^2+16q).
\]
Moreover,
\[
  \chi_{\kappa|A^+}(\lambda)
  =
  \lambda^2(\lambda^2-128q)(\lambda^2+16q),
\]
and
\[
  \chi_{\kappa|A^-}(\lambda)
  =
  \lambda(\lambda^2-16q).
\]
Consequently, for every diagonal evaluation \(\ev_\zeta\) with
\(0 < \vert{}\ev_\zeta(q)\vert{}< 1\), $\ev_\zeta$ is zero on the Hodge formal variables $t^i$. The evaluated ambient zero generalized eigenspace
has rank \(3\), with
\[
  \rk A^+_{\zeta,0}=2,
  \qquad
  \rk A^-_{\zeta,0}=1.
\]
\end{proposition}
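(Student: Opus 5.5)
The plan is a direct computation from \cref{prop:ambient-diagonal-matrix}, organised to use the grading. Since \(\kappa = c_1(X)\star(-) = 2(H_1+H_2)\star(-) = 2m_H\), we have \(\chi_\kappa(\lambda) = 2^{\dim}\chi_{m_H}(\lambda/2)\) on each block. So it suffices to compute the characteristic polynomials \(\chi_{M_\pm}(\mu)\) and substitute \(\mu = \lambda/2\). Because \(M_H(q) = M_+(q)\oplus M_-(q)\), the full characteristic polynomial is the product of the two block polynomials. The three claimed formulas are therefore consistent once the block formulas are established.

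\textbf{The block \(A^-\).} The matrix \(M_-(q)\) is \(3\times 3\) and tridiagonal with zero diagonal. Expanding \(\det(\mu - M_-)\) directly gives \(\mu^3 - \mu(2q+2q) = \mu(\mu^2 - 4q)\). Rescaling by \(\mu=\lambda/2\) and multiplying by \(2^3\) yields \(\lambda(\lambda^2 - 16q)\).

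\textbf{The block \(A^+\).} For the \(6\times 6\) matrix I avoid a brute-force expansion. Multiplication by \(H\) raises cohomological degree by \(2\) modulo the grading of \(q\), so \(M_+\) exchanges two groups of basis vectors.
\begin{itemize}
\item The first group is \(E = \langle 1,\,H_1^2+H_2^2,\,H_1H_2,\,H_1^2H_2^2\rangle\), the classes of degree \(\equiv 0 \pmod 4\).
\item The second group is \(O = \langle H_1+H_2,\,H_1^2H_2+H_1H_2^2\rangle\), the classes of degree \(\equiv 2 \pmod 4\).
\end{itemize}
Reordering the basis accordingly puts \(M_+\) in the block form \(\begin{pmatrix}0 & P\\ R & 0\end{pmatrix}\), where \(P\) is \(4\times 2\) and \(R\) is \(2\times 4\). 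The standard Schur-complement identity then gives \(\chi_{M_+}(\mu) = \mu^{2}\det(\mu^2 I_2 - RP)\). One computes the \(2\times 2\) matrix \(RP\) from the displayed entries and checks that its eigenvalues are \(32q\) and \(-4q\), i.e.\ its trace is \(28q\) and its determinant is \(-128q^2\). This gives \(\chi_{M_+}(\mu) = \mu^2(\mu^2-32q)(\mu^2+4q)\). Rescaling by \(\mu=\lambda/2\) with the factor \(2^6\) gives \(\lambda^2(\lambda^2-128q)(\lambda^2+16q)\). Multiplying the two blocks produces the stated \(\chi_\kappa\).

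\textbf{The evaluated statement.} The diagonal evaluation \(\ev_\zeta\) sends all \(t^i\) to zero by definition and sends \(q\mapsto \zeta b^4\) with \(\zeta\ne 0\). Hence the nonzero roots \(\pm 4\sqrt{\zeta}\,b^2\), \(\pm 8\sqrt{2\zeta}\,b^2\) and \(\pm 4\sqrt{-\zeta}\,b^2\) (in a splitting extension \(L\)) are nonzero. The evaluated zero generalized eigenspace on each block therefore has rank equal to the algebraic multiplicity of \(\lambda=0\). That multiplicity is \(2\) on \(A^+\) and \(1\) on \(A^-\), hence \(3\) in total. These generalized eigenspaces are well defined blockwise because \(\ev_\zeta(\kappa)\) preserves the decomposition \(A=A^+\oplus A^-\).

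The only step requiring care is getting \(RP\) right from the column-vector convention of \cref{prop:ambient-diagonal-matrix}. This is where I expect any slip, so I would double-check it by verifying the trace and determinant against the coefficients of \(\mu^4\) and \(\mu^2\) in a direct cofactor expansion of \(\det(\mu - M_+)\).
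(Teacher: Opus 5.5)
Your proposal is correct and follows the paper's route. The paper computes the characteristic polynomial of the block-diagonal matrix $2M_+(q)\oplus 2M_-(q)$, reads the zero-eigenspace ranks off the multiplicity of $\lambda$ in each block, and uses $\ev_\zeta(q)\neq 0$ to keep the remaining eigenvalues nonzero. Your mod-$4$ grading and Schur-complement reduction is just an efficient way to evaluate that determinant, and it checks out: $RP=\begin{pmatrix}14q & 108q^2\\ 3 & 14q\end{pmatrix}$ has trace $28q$ and determinant $-128q^2$, as you anticipated.
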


\begin{proof}
This is the characteristic polynomial of the block diagonal matrix
\[
  2M_+(q)\oplus 2M_-(q)
\]
from Proposition~\ref{prop:ambient-diagonal-matrix}.  The ranks of the
zero generalized eigenspaces are the multiplicities of the factor
\(\lambda\) in the two blocks.  The assumption \(\ev_\zeta(q)\neq0\)
ensures that the nonzero eigenvalues do not specialize to \(0\).
\end{proof}

Let \(K_\zeta\) be a splitting field for the characteristic polynomial of
\(\ev_\zeta(\kappa)\), and set
\[
  S_\zeta:=S_{K_\zeta},
  \qquad
  E_{\zeta,0}:=E^X_{\ev_\zeta,0}.
\]
Let
\[
  A_{\zeta,0}
  :=
  E_{\zeta,0}\cap (A\otimes_\mathbb Q S_\zeta).
\]

\begin{proposition}[Invariant rank of the zero eigenspace]
\label[proposition]{prop:zero-eigenspace}
For every diagonal small evaluation \(\ev_\zeta\) defined above,
\[
  V(X)\otimes_\QQ S_\zeta\subset E_{\zeta,0}.
\]
If \(X\) is \(\ZZ/2\)-Hodge-general, then
\[
  \rho^{G,X}_{\ev_\zeta,0}=2,
  \qquad
  \nu^X_{\ev_\zeta,0}=1.
\]
\end{proposition}

\begin{proof}
By \cite[Proposition~15]{Benedetti26Verra}, small quantum multiplication by
\(c_1(X)=2(H_1+H_2)\) vanishes on the primitive cohomology of a Verra fourfold.
Since
\[
  V(X)=A_{\mathrm{mid}}^\perp\subset H^4(X,\QQ)
\]
is the primitive middle cohomology, we have
\[
  c_1(X)\star\gamma=0
  \qquad
  \text{for all } \gamma\in V(X).
\]
After applying the diagonal small evaluation \(\ev_\zeta\), this gives
\[
  V(X)\otimes_\QQ S_\zeta\subset E_{\zeta,0}.
\]

The ambient cohomology is preserved by \(c_1(X)\star(-)\), and the primitive
part \(V(X)\) is killed by \(c_1(X)\star(-)\). Hence
\[
  E_{\zeta,0}
  =
  A_{\zeta,0}\oplus \bigl(V(X)\otimes_\QQ S_\zeta\bigr).
\]
By \Cref{prop:ambient-zero-eigenspace}, the \(G\)-fixed part of the ambient
zero eigenspace has rank \(2\). The anti-invariant primitive part \(V^-(X)\)
does not contribute to \(G\)-fixed classes, and by \(\ZZ/2\)-Hodge-generality
\[
  V^+(X)\cap H^4(X,\QQ)_{\Hdg}=0.
\]
Therefore the \(G\)-fixed rational Hodge classes in \(E_{\zeta,0}\) are exactly
the \(G\)-fixed classes in the ambient zero block, so
\[
  \rho^{G,X}_{\ev_\zeta,0}=2.
\]

Finally,
\[
  H^{3,1}(X)\subset V^+(X)_\CC
  \subset E_{\zeta,0}\otimes S_{\zeta,\CC}.
\]
Since \(h^{3,1}(X)=1\), this gives
\[
  \nu^X_{\ev_\zeta,0}=1.
\]
\end{proof}

\begin{theorem}[Equivariant irrationality]\label{thm:conditional-Verra}
Let \(X\) be a \(\ZZ/2\)-Hodge-general symmetric Verra fourfold with factor-swap involution \(\epsilon\).  Then \((X,\epsilon)\) is not \(\ZZ/2\)-birational to \(\PP^4\) with any regular involution. In particular, a very general symmetric Verra fourfold is \(\ZZ/2\)-irrational.
\end{theorem}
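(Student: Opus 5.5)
The argument is a contrapositive application of \cref{thm:G-fourfold-obstruction}, packaged as \cref{cor:z2-irr}. Suppose, for contradiction, that \((X,\epsilon)\) is \(\ZZ/2\)-birational to \((\PP^4,\iota)\) for some regular involution \(\iota\). Then \cref{thm:G-fourfold-obstruction} says that \(X\), with \(G=\langle\epsilon\rangle\), satisfies the evaluated \(G\)-equivariant fourfold inequality of \cref{def:G-fourfold-ineq}. It therefore suffices to produce one \(G\)-invariant evaluation \(\ev\) outside the exceptional set \(\mathcal S\), together with one eigenvalue \(\lambda\), such that
\[
\nu^X_{\ev,\lambda}\neq 0 \quad\text{and}\quad \rho^{G,X}_{\ev,\lambda}\le 2.
\]

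For the evaluation I take the diagonal small evaluations \(\ev_\zeta\) from \cref{sec:ambient}, with \(0<|\zeta|<1\), and the eigenvalue \(\lambda=0\).

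\begin{itemize}
\item \(G\)-invariance: \(\epsilon\) swaps \(d_1\) and \(d_2\), and \(\ev_\zeta(Q_1)=\ev_\zeta(Q_2)\). All Hodge variables are set to zero, so the condition \(\ev(g^*\tau)=\ev(\tau)\) holds trivially.
\item Stability of the eigenspace: by \cref{lem:G-stability}, \(E_{\zeta,0}\) is \(G\)-stable, so \(\rho^{G}\) is well defined.
\item The counts: \cref{prop:zero-eigenspace}, applied under the \(\ZZ/2\)-Hodge-general hypothesis, gives \(\rho^{G,X}_{\ev_\zeta,0}=2<3\) and \(\nu^X_{\ev_\zeta,0}=1\neq 0\).
\end{itemize}

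So the inequality fails at \((\ev_\zeta,0)\), provided \(\ev_\zeta\notin\mathcal S\).

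The main obstacle is exactly that proviso: showing some \(\ev_\zeta\) avoids the exceptional set \(\mathcal S\) of \cite[Definition~27]{guere2026irrationality}. My plan is to exploit the freedom in \(\zeta\). The values \(\zeta\) range over a subset of \(F\) with nonempty interior in the disc \(|\zeta|<1\). As I understand Guéré's condition, \(\mathcal S\) imposes only countably many, or algebraically degenerate, constraints, for instance coincidences of eigenvalues after evaluation, or special values where a shift along an exceptional line class fails. So a generic \(\zeta\), for example one transcendental over \(\QQ\) in a suitable sense, lies outside \(\mathcal S\).

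The conclusion \(\rho^{G}=2\) and \(\nu=1\) is uniform in \(\zeta\), as the following checks show.

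\begin{itemize}
\item The nonzero ambient eigenvalues \(\pm\sqrt{16q}\), \(\pm\sqrt{128q}\) and \(\pm\sqrt{-16q}\) from \cref{prop:ambient-zero-eigenspace} never specialize to \(0\) when \(\zeta\neq0\).
\item The primitive part \(V(X)\) is killed by \(\kappa\) identically.
\end{itemize}

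Hence any admissible \(\zeta\) works. If \(\mathcal S\) must also be checked to be compatible with the \(G\)-shift procedure, \cref{rem:gue-prop38} already ensures that the shifted evaluations remain \(G\)-invariant.

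This contradicts the assumed \(\ZZ/2\)-birationality, proving the first assertion. For the \emph{in particular} statement, combine this with \cref{thm:G-Hodge-general-nonempty}: a very general symmetric Verra fourfold is \(\ZZ/2\)-Hodge-general, hence \((X,\epsilon)\) is \(\ZZ/2\)-irrational in the sense of \cref{def:z2-rational}.
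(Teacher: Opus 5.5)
Your proposal is correct and follows the paper's proof. Like the paper, you argue contrapositively via \cref{thm:G-fourfold-obstruction} and \cref{cor:z2-irr}, use the diagonal evaluations $\ev_\zeta$ at $\lambda=0$ (where \cref{prop:zero-eigenspace} gives $\nu^X_{\ev_\zeta,0}=1$ and $\rho^{G,X}_{\ev_\zeta,0}=2$), and rely on the freedom in $\zeta$ to escape $\mathcal S$; the paper handles $\mathcal S$ more directly, noting that the $\ev_\zeta$ form an infinite family while the inequality must hold for all but finitely many such evaluations, so your guesses about countable or algebraic constraints and transcendental $\zeta$ are not needed.
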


\begin{proof}
The evaluations \(\ev_\zeta\), with \(0\ne\zeta\) and \(|\zeta|<1\), form an infinite family of \(G\)-invariant evaluations that do not vanish on Novikov variables.  By \Cref{prop:zero-eigenspace}, each satisfies
\[
  \nu^X_{\ev_\zeta,0}=1,
  \qquad
  \rho^{G,X}_{\ev_\zeta,0}=2.
\]
This contradicts the evaluated \(G\)-fixed fourfold inequality, which would require \(\rho^{G,X}_{\ev_\zeta,0}\ge3\) for all but finitely many such evaluations whenever \(\nu^X_{\ev_\zeta,0}\ne0\).  Therefore by~\cref{cor:z2-irr}, \((X,\epsilon)\) is \(\ZZ/2\)-irrational. At last, combining this with~\cref{thm:G-Hodge-general-nonempty}, we obtain the \(\ZZ/2\)-irrationality of the very general symmetric Verra fourfold. 
\end{proof}

\section*{Acknowledgments}
I am very grateful to Tom Coates, Alessio Corti, and Jérémy Guéré for their assistance and many valuable discussions on this work. In particular, I would like to acknowledge Jérémy for helping with the proof of the $G$-equivariance of Iritani's blowup decomposition, and for his ongoing feedback on this manuscript. I am also grateful to Vladimiro Benedetti, Christian Boehning, James Hotchkiss, Zhaoyang Liu, Laurent Manivel, and Nicolas Perrin for their careful feedback and questions on an earlier version of this work, which led to numerous improvements reflected here. I would like to thank Ludmil Katzarkov and IMSA for organizing and supporting my participation in the \emph{Hodge theory, Birational Geometry, and Atoms} conference, which provided the space in which many of these exchanges occurred. This research was partly funded by EPSRC grant number EP/Y028872/1.
\bibliographystyle{alpha}
\bibliography{refs}
\newpage
\appendix
\section{Smoothness of the diagonal quartic}
\begin{proposition}
\label[proposition]{lemma:quartic}
Let $B=\{f_{2,2}=0\}\subset \mathbb P^2\times\mathbb P^2$
be the smooth $(2,2)$-divisor and let $\Delta_{\mathrm{diag}}\subset \mathbb P^2\times\mathbb P^2$
be the diagonal. Then
\[
  C_4:=B\cap \Delta_{\mathrm{diag}}
\]
is a smooth plane quartic.
\end{proposition}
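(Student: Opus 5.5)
The plan is to show directly that any singular point of \(C_4\) would be a singular point of \(B\). The key input is that \(f_{2,2}\) is symmetric, \(f(x,y)=f(y,x)\), which forces the two partial gradients of \(f\) to agree along the diagonal.

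Identify \(\Delta_{\mathrm{diag}}\cong\PP^2\) via \(x\mapsto(x,x)\). Then \(C_4\) is the zero scheme of
\[
g(x):=f_{2,2}(x,x)\in H^0(\PP^2,\OO(4)).
\]
So \(C_4\) is a plane quartic provided \(g\not\equiv 0\). Smoothness of \(C_4\) means that \(g\) and its three partials \(\partial g/\partial x_i\) have no common zero on \(\PP^2\). By the Euler relation, it suffices that the partials \(\partial g/\partial x_i\) have no common zero.

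First I would compute the gradient of \(g\) by the chain rule:
\[
\frac{\partial g}{\partial x_i}(x)=\frac{\partial f}{\partial x_i}(x,x)+\frac{\partial f}{\partial y_i}(x,x).
\]
Next I would use symmetry. Differentiating the identity \(f(x,y)=f(y,x)\) with respect to \(x_i\) and then setting \(y=x\) gives
\[
\frac{\partial f}{\partial x_i}(p,p)=\frac{\partial f}{\partial y_i}(p,p)\qquad\text{for every } p.
\]
Hence along the diagonal \(\partial g/\partial x_i(p)=2\,\partial f/\partial x_i(p,p)\).

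Now suppose \(p\) is a common zero of the \(\partial g/\partial x_i\). Then, since \(\operatorname{char}=0\), all six partials \(\partial f/\partial x_i\) and \(\partial f/\partial y_i\) vanish at \((p,p)\). By the bihomogeneous Euler relation \(2f=\sum_i x_i\,\partial f/\partial x_i\), also \(f(p,p)=0\). Thus \((p,p)\) is a point of \(B\) at which the full Jacobian of \(f\) vanishes, i.e.\ a singular point of \(B\). This contradicts the assumption that \(B\) is smooth.

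The case \(g\equiv 0\) is handled by the same argument. If \(g\) vanished identically, every point of \(\PP^2\) would be a common zero of the \(\partial g/\partial x_i\), so every point of \(\Delta_{\mathrm{diag}}\) would be singular on \(B\), which is impossible. Hence \(g\) is a nonzero quartic form with no singular zeros. So \(C_4\) is a smooth (in particular reduced and irreducible) plane quartic, and it has genus \(3\) by the genus formula.

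I expect the only delicate point to be the symmetry step, namely that the \(x\)- and \(y\)-gradients of \(f\) coincide on the diagonal. The rest is the standard Jacobian criterion together with Euler's formula.
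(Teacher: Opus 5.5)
Your proof is correct and follows essentially the same route as the paper. In both, symmetry of $f_{2,2}$ forces its $x$- and $y$-gradients to agree along the diagonal, so in characteristic zero a singular point of $C_4$ would be a point where the full differential of $f_{2,2}$ vanishes, i.e.\ a singular point of $B$. Your version is slightly more complete: you also rule out $f_{2,2}(x,x)\equiv 0$, which guarantees $C_4$ is a genuine quartic curve rather than the whole diagonal, a point the paper leaves implicit.
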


\begin{proof}
Suppose \(p\in C_4\) were singular. Then
\[
  d(f_{2,2}|_{\Delta_{\mathrm{diag}}})_p=0.
\]
By symmetry of \(f_{2,2}\), at the point \((p,p)\in\Delta_{\mathrm{diag}}\) we have
\[
  df_{2,2}|_{(p,p)}=(\alpha,\alpha)
  \in T_p^\vee\mathbb P^2\oplus T_p^\vee\mathbb P^2.
\]
Restricting to the diagonal gives
\[
  d(f_{2,2}|_{\Delta_{\mathrm{diag}}})_p=2\alpha.
\]
Since we work over \(\mathbb C\), this implies \(\alpha=0\). Hence
\[
  df_{2,2}|_{(p,p)}=0.
\]
As \(p\in C_4\), we also have \(f_{2,2}(p,p)=0\). Therefore \(B\) would be singular
at \((p,p)\), which contradicts the smoothness of \(B\). Thus \(C_4\) is smooth.
\end{proof}
\section{Jacobian-ring and Torelli Computation}
\label{app:period-rank}
\begin{proposition}[Equivariant infinitesimal Torelli]
\label[proposition]{prop:period-rank-computation}
Let \(X\) be a smooth symmetric Verra fourfold. Under the Jacobian-ring description of the infinitesimal variation of Hodge structure, the differential of the invariant period map is identified with the natural multiplication map
\[
  R^{+}_{2,2}
  \longrightarrow
  \Hom\bigl(H^{3,1}(V^{+}(X)),\,H^{2,2}(V^{+}(X))\bigr).
\]
For the smooth symmetric branch polynomial \(f_0\) given below, this map has maximal rank of \(12\).
\end{proposition}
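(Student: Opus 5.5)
We will use the Griffiths-type residue description of the Hodge structure of a double cover, following the Jacobian-ring method used for weighted hypersurfaces and adapted to the toric ambient \(\mathcal F\).

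\emph{Step 1: the Jacobian ring.} Write \(F=w^2-f(x,y)\) in the Cox ring \(S=\CC[x_0,x_1,x_2,y_0,y_1,y_2,w]\), graded by \(\ZZ^2\) via the weight matrix \eqref{eq:weight-data}. The Jacobian ideal is \(J_F=(\partial_{x_i}f,\partial_{y_i}f,w)\). Quotienting by \(w\) gives \(R=\CC[x,y]/(\partial_x f,\partial_y f)\). Note that the Euler relations put \(f\) itself in this ideal.

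The anticanonical-type degree of the residue numerator in \cref{lem:H31-invariant} identifies \(H^{3,1}(X)\) with \(R_{(0,0)}\cong\CC\). It identifies \(V^{2,2}(X)\) with the part of \(R_{(2,2)}\) that is odd in \(w\), namely \(w\cdot R_{(1,1)}\) shifted. Equivalently, after accounting for \(\deg w=(1,1)\), we get \(V^{2,2}\cong R_{(2,2)}\).

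First-order deformations of \(X\) inside the family are \(H^0(\OO(2,2))/\bigl(J\cap H^0(\OO(2,2))\bigr)\cong R_{(2,2)}\), modulo \(\mathfrak{gl}_3\times\mathfrak{gl}_3\). The Griffiths formula then says that \(d\Phi\) is the multiplication map \(R_{(2,2)}\otimes R_{(0,0)}\to R_{(2,2)}\). This map is the identity, so the whole difficulty lies in the dimension of \(R^+_{2,2}\).

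\emph{Step 2: the involution.} The involution acts on \(R\) by swapping \(x\) and \(y\), and it fixes the residue generator of \(H^{3,1}\) (\cref{lem:H31-invariant}). Restricting to invariants therefore identifies the tangent space to \(\mathcal M_{\mathrm{sym}}\) with \(R^+_{2,2}=\bigl(H^0(\OO(2,2))^+\bigr)/J^+_{2,2}\), and \(H^{2,2}(V^+(X))\) with the same \(R^+_{2,2}\). The differential \(d\Phi^+\) is then multiplication by the generator of \(R_{(0,0)}\), which is injective. Maximal rank \(12\) is therefore equivalent to
\[
  \dim R^+_{2,2}=12 .
\]

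\emph{Step 3: the dimension count.} We have \(\dim H^0(\OO(2,2))^+=21\). The invariant part of \(J_{2,2}\) is spanned by the \(\epsilon\)-symmetrizations of \(\ell(x)\,\partial_{x_i}f\), where \(\ell\) is linear. This gives the image of \(\mathfrak{gl}_3\) acting diagonally, of dimension at most \(9\). Its dimension is exactly \(9\) precisely when the diagonal \(\operatorname{PGL}_3\)-stabilizer of \(f\) is finite, with the scaling accounted for by \(f\in J\). This gives \(21-9=12\), matching \(\dim D^+\) (\cref{prop:dim-ipd}).

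\emph{Step 4: the explicit \(f_0\).} We will pick an explicit \(f_0\), for instance the Fermat-like symmetric polynomial \(\sum_i x_i^2y_i^2+c\sum_{i\ne j}x_i^2y_j^2\) perturbed by a generic symmetric term. For this \(f_0\) we check smoothness of \(B\) by the Jacobian criterion, and compute the rank of the \(21\times 9\) matrix of symmetrized partial derivatives (e.g.\ by computer algebra).

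\emph{Main obstacle.} The hardest step is justifying the Jacobian-ring identification for the double cover inside the non-compact toric \(\mathcal F\), in particular that \(R_{(2,2)}\) has the expected dimension \(19\), so that the \(V^{2,2}\) pieces match. I would first reduce to the branch divisor \(B\subset\PP^2\times\PP^2\) via the cyclic-cover residue formula, where Batyrev–Cox Jacobian rings for smooth toric hypersurfaces apply directly.
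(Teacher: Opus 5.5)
Your proposal follows the paper's proof. The paper also takes the Jacobian-ring/infinitesimal Torelli identification for granted to get injectivity of $R^+_{2,2}\to\Hom(H^{3,1},H^{2,2}(V^+))$. It then gets rank $12$ by a Magma check, for an explicit symmetric $f_0$ (built from a symmetric $6\times 6$ coefficient matrix rather than your perturbed Fermat form), that $B$ is smooth and that the nine invariant generators $x_j\partial_{x_i}f_0+y_j\partial_{y_i}f_0$ are linearly independent in the $21$-dimensional $S^+_{2,2}$.

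The only slip is your remark about an ``odd in $w$'' part of $R$. Since $w\in J_F$, that part is zero, and $V^{2,2}\cong R_{(2,2)}$ comes directly from the residue degree $3(2,2)-(4,4)=(2,2)$.
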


\begin{proof}
Let \(x\) and \(y\) be homogeneous coordinates on the two factors of \(\PP^2\times\PP^2\). We represent bi-degree \((2,2)\) polynomials as
\[
  Q_x=(x_0^2,x_0x_1,x_0x_2,x_1^2,x_1x_2,x_2^2),
  \qquad
  Q_y=(y_0^2,y_0y_1,y_0y_2,y_1^2,y_1y_2,y_2^2).
\]
Consider the specific branch polynomial \(f_0=\sum_{i,j=1}^6 A_{ij}Q_{x,i}Q_{y,j}\), where \(A\) is the matrix
\[
A=
\begin{pmatrix}
  1&  1& -2&  0&  2&  1\\
  1&  1&  0&  1&  0&  2\\
 -2&  0& -1&  2& -1&  0\\
  0&  1&  2& -1& -2&  2\\
  2&  0& -1& -2&  0&  2\\
  1&  2&  0&  2&  2&  2
\end{pmatrix}.
\]
Because \(A\) is symmetric, \(f_0\) is invariant under the factor-swapping involution \((x,y)\mapsto(y,x)\). 

The space of branch polynomials \(S_{2,2}=H^0(\PP^2\times\PP^2,\OO(2,2))\) has dimension \(36\), and its symmetric subspace \(S^+_{2,2}\) has dimension \(6+\binom{6}{2}=21\). 

First-order deformations of the branch divisor \(B=\{f_0=0\}\) are governed by the Jacobian ring \(R_{2,2}=S_{2,2}/J_{2,2}\). For symmetric deformations, the trivial ones are captured by the ideal \(J^+_{2,2}\), which is generated by the diagonal \(\PGL_3\)-action on \(\PP^2\times\PP^2\) and spanned by the \(9\) invariant vector fields
\[
  g_{ij}
  = x_j\frac{\partial f_0}{\partial x_i}
  + y_j\frac{\partial f_0}{\partial y_i},
  \qquad
  0\leq i,j\leq 2.
\]

The Magma script~\cref{app:magma-symmetric-rank} below verifies that these \(9\) generators \(g_{ij}\) are linearly independent in \(S^+_{2,2}\), meaning the symmetric deformation space has dimension
  \[
    \dim R^+_{2,2}
    =
    \dim S^+_{2,2}-\dim J^+_{2,2}
    =
    21-9
    =
    12.
  \]
An analogous computation over the full space yields \(\dim J_{2,2}=17\), giving \(\dim R_{2,2}=19\) and \(\dim R^-_{2,2}=7\).

To verify smoothness, we cover \(\mathbf{P}^2\times\mathbf{P}^2\) with nine affine charts \(U_{a,b}=\{x_a\neq0,\, y_b\neq0\}\), normalizing \(x_a=y_b=1\) on each. By Euler's identity, \(f_0\) lies in its own Jacobian ideal. The script confirms that the ideal generated by the six first partial derivatives \(\partial f_0/\partial x_i\) and \(\partial f_0/\partial y_j\) evaluates to the unit ideal across all nine charts. Hence, \(B\) is non-singular, and the associated double cover \(X_{f_0}=\{w^2=f_0\}\) is a smooth symmetric Verra fourfold.

By the infinitesimal Torelli theorem, the natural multiplication map from \(R^+_{2,2}\) to \(\operatorname{Hom}(H^{3,1}(V^+(X_{f_0})),H^{2,2}(V^+(X_{f_0})))\) is injective. Since the target space also has dimension \(12\), this map is an isomorphism. Consequently, the differential of the invariant period map attains the maximal rank of \(12\).
\end{proof}

\section{Magma script for Proposition~\ref{prop:period-rank-computation}}
\label[appendix]{app:magma-symmetric-rank}

\begin{verbatim}
 QQ := Rationals();
R<x0,x1,x2,y0,y1,y2> := PolynomialRing(QQ,6);

Xs := [x0,x1,x2];
Ys := [y0,y1,y2];

Qx := [x0^2, x0*x1, x0*x2, x1^2, x1*x2, x2^2];
Qy := [y0^2, y0*y1, y0*y2, y1^2, y1*y2, y2^2];

A := Matrix(QQ,6,6,[
  1, 1,-2, 0, 2, 1,
  1, 1, 0, 1, 0, 2,
 -2, 0,-1, 2,-1, 0,
  0, 1, 2,-1,-2, 2,
  2, 0,-1,-2, 0, 2,
  1, 2, 0, 2, 2, 2
]);

f := &+[ A[i,j]*Qx[i]*Qy[j] : i in [1..6], j in [1..6] ];

// Bidegree-(2,2) monomial basis.
Exp2 := [
  [2,0,0],
  [1,1,0],
  [1,0,1],
  [0,2,0],
  [0,1,1],
  [0,0,2]
];

Mon := function(e,d)
  return x0^e[1] * x1^e[2] * x2^e[3]
       * y0^d[1] * y1^d[2] * y2^d[3];
end function;

Mon22 := [ Mon(e,d) : e in Exp2, d in Exp2 ];

// Representatives for factor-swap orbits in S^+_{2,2}.
InvReps := [];

for m in Mon22 do
  sm := Swap(m);

  AlreadyUsed := false;
  for r in InvReps do
    if m eq r or m eq Swap(r) then
      AlreadyUsed := true;
      break;
    end if;
  end for;

  if not AlreadyUsed then
    Append(~InvReps,m);
  end if;
end for;

CoeffVector := function(p,basis)
  return [ MonomialCoefficient(p,m) : m in basis ];
end function;

// Invariant infinitesimal diagonal GL_3-orbit
// g_ij = x_j f_{x_i} + y_j f_{y_i}.
Gij := [
  Xs[j]*Derivative(f,i) + Ys[j]*Derivative(f,i+3)
  : i in [1..3], j in [1..3]
];

MInv := Matrix(QQ,[ CoeffVector(g,InvReps) : g in Gij ]);

// degree-(2,2) Jacobian 
FullGens := [];

for i in [1..3] do
  for j in [1..3] do
    Append(~FullGens, Xs[j]*Derivative(f,i));
    Append(~FullGens, Ys[j]*Derivative(f,i+3));
  end for;
end for;

MFull := Matrix(QQ,[ CoeffVector(g,Mon22) : g in FullGens ]);

// Smoothness of branch divisor in P2 x P2.
Partials := [ Derivative(f,i) : i in [1..6] ];
Smooth := true;

for a in [1..3] do
  for b in [1..3] do
    I := ideal< R | [Xs[a]-1, Ys[b]-1] cat Partials >;
    ChartSmooth := R!1 in I;
    printf "chart x%o=1, y%o=1: %o\n", a-1, b-1, ChartSmooth;
    Smooth := Smooth and ChartSmooth;
  end for;
end for;

print "Smooth branch divisor?", Smooth;

print "dim S_22      =", #Mon22;
print "dim S_22^+    =", #InvReps;
print "rank J_22     =", Rank(MFull);
print "rank J_22^+   =", Rank(MInv);
print "dim R_22      =", #Mon22 - Rank(MFull);
print "dim R_22^+    =", #InvReps - Rank(MInv);
print "dim R_22^-    =", (#Mon22 - Rank(MFull)) - (#InvReps - Rank(MInv));
\end{verbatim}

\section*{Quantum Differential Equation---Small Quantum Cohomology Computation}
\label[appendix]{app:comp}
As discussed in \cref{ssec:amb}, the ambient part $A$ of $H^\bullet(X;\mathbb{Q})$ has dimension~$9$ with the following Betti numbers:
\[
  (h^0,\, h^2,\, h^4,\, h^6,\, h^8)_{\mathrm{amb}}
  \;=\;
  (1,\,2,\,3,\,2,\,1).
\]
The factor-swap linear involution decomposes it as
\[
  A=A^+\oplus A^-.
\]
For a symmetric Verra fourfold this is the eigenspace decomposition of
the geometric involution \(\epsilon^*\).  We use the ordered basis $(s_0,\,s_1,\,s_2,\,s_3,\,s_4,\,s_5)$ for $A^+$, with cohomological degrees $\deg(s_i)=(0,2,4,4,6,8)$, and $(a_1,\,a_2,\,a_3)$ for $A^-$, with degrees $\deg(a_i)=(2,4,6)$.

Let $H = H_1 + H_2$. Corollary 2.5 in~\citep{Iritani_2011} implies that the ambient part $A$ of $H^\bullet(X)$ is closed under quantum multiplication by $H$. It is therefore also closed under quantum multiplication by ${-K_X} = 2H$. Furthermore, \(H=H_1+H_2\) is invariant under the factor-swap, so
\(H\star(-)\) preserves the decomposition of $A$. 

On a symmetric Verra fourfold this preservation is equivariance with
respect to the geometric involution \(\epsilon\). We write
\[
  m_H \;=\; M_+(q)\;\oplus\; M_-(q),
\]
where $M_+(q)$ is the $6\times 6$ matrix on $A^+$ and $M_-(q)$ is the $3\times 3$
matrix on $A^-$. The $i$-th column
of~$M_+$ gives the coordinates of $H\star s_i$ in the basis
$(s_0,\dots,s_5)$, and similarly for~$M_-$ on $(a_1,a_2,a_3)$.
\[
  H\star \mathbf{v} = M_+\,\mathbf{v}
  \quad\text{for }
  \mathbf{v}\in A^+,
  \qquad
  H\star \mathbf{w} = M_-\,\mathbf{w}
  \quad\text{for }
  \mathbf{w}\in A^-.
\]

\label{ssec:ansatz-sym}
Since \(q\) is the Novikov monomial \(Q_1=Q_2\), it has
cohomological degree \(4\). Therefore the \((j,i)\)-entry of \(M_+(q)\), namely
the coefficient of \(s_j\) in \(H\star s_i\), can receive a contribution
proportional to \(q^d\) only when
\[
\deg(s_j)=2+\deg(s_i)-4d.
\]
Filling in the values for the classical cup product, the quantum degree corrections, and imposing Frobenius self-adjointness, we get the following matrix that depends on four unknown parameters $s$,~$t$,~$u$,~$v$. 
\begin{equation}\label{eq:M-plus}
  M_+(q)
  \;=\;
  \begin{pmatrix}
    0  & 2s q & 0        & 0       & 2vq^2   & 0       \\
    1  & 0         & t q  & u q  & 0        & vq^2    \\
    0  & 1         & 0        & 0       & t q  & 0       \\
    0  & 2         & 0        & 0       & 2u q  & 0       \\
    0  & 0         & 1        & 1       & 0        & s q \\
    0  & 0         & 0        & 0       & 2        & 0
  \end{pmatrix}
\end{equation}

\label{ssec:ansatz-anti}
Applying the same approach to $A^-$ using the basis $(a_1,a_2,a_3)$ with
degrees $(2,4,6)$ gives us the following block that depends on a single Gromov--Witten invariant~$N$.
\begin{equation}\label{eq:M-minus}
  M_-(q)
  \;=\;
  \begin{pmatrix}
    0                 & -\tfrac{N}{2}\,q & 0                 \\
    1                 & 0                 & -\tfrac{N}{2}\,q  \\
    0                 & 1                 & 0
  \end{pmatrix},
\end{equation}

Applying the cyclic vector algorithm described above
to $M_+(q)$ with cyclic vector $f = y_5$ and 
$r_0 = (0, 0, 0, 0, 0, 1)$ gives the scalar Picard--Fuchs operator
\begin{align*}
L_X ={}& D^6 - D^5 - 2(2s+t+2u)\,q\,D^4 - 3(2s+t+2u)\,q\,D^3 \\
      &- (6s+t+2u)\,q\,D^2 - 2s\,q\,D \\
      &+ 2\bigl(2s^2+2st+4su-(t-u)^2-6v\bigr)\,q^2 D^2 \\
      &+ 4\bigl(2s^2+2st+4su-(t-u)^2-6v\bigr)\,q^2 D \\
      &+ 2\bigl(2s^2+2st+4su-6v\bigr)\,q^2.
\end{align*}

Imposing $L_X\, G(q) = 0$ against the expansion of 
$G(q)$ 
from~\eqref{eq:qp-mw4} and matching coefficients in $q$ gives
\begin{equation}\label{eq:period-system}
s = 2, \qquad t + 2u = 10, \qquad (t - u)^2 = 16, \qquad v = 16.
\end{equation}

\begin{remark}\label[remark]{rem:sign}
The system~\eqref{eq:period-system} determines $(t-u)^2$ but not the sign of $t-u$.  The two solutions to
$t+2u=10$, $t-u=\pm4$ are
$(t,u)=(6,2)$ or
$(t,u)=(\tfrac{2}{3}, \tfrac{14}{3})$.
We use Gromov--Witten invariants as enumerative inputs to eliminate the second solution to find
\[
  s=2,\quad t=6,\quad u=2,\quad v=16.
\]
\end{remark}

These values give
\begin{equation}\label{eq:M-plus-resolved}
  M_+(q)
  \;=\;
  \begin{pmatrix}
    0     & 4q    & 0   & 0   & 32q^2  & 0     \\
    1     & 0     & 6q  & 2q  & 0      & 16q^2 \\
    0     & 1     & 0   & 0   & 6q     & 0     \\
    0     & 2     & 0   & 0   & 4q     & 0     \\
    0     & 0     & 1   & 1   & 0      & 2q    \\
    0     & 0     & 0   & 0   & 2      & 0
  \end{pmatrix},
\end{equation}
and 
\begin{equation*}
L_X
= D^6 - D^5
- 28q D^4
- 42q D^3
- 22q D^2
- 4q D
- 128 q^2 D^2
- 256 q^2 D
- 96 q^2.
\end{equation*}

\begin{proposition}\label[proposition]{prop:N}
The Gromov--Witten invariant
$N = \langle H,\, a_2,\, a_3 \rangle_{d_i}$ is equal to~$-4$.
\end{proposition}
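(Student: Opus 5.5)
The plan is to reduce $N$ to a two-point invariant and then compute that invariant from the geometry of lines in the fibres of $p_2$. Expanding $a_2$ and $a_3$ in monomials turns it into a handful of enumerative counts. By the symmetry $\epsilon$, which swaps $d_1\leftrightarrow d_2$ and sends $a_i\mapsto -a_i$, the invariant is the same for $d_1$ and $d_2$, so it suffices to treat $d_1$. The divisor axiom gives
\[
N=\langle H,a_2,a_3\rangle_{d_1}=(H\cdot d_1)\,\langle a_2,a_3\rangle_{d_1}=\langle a_2,a_3\rangle_{d_1}.
\]
The virtual dimension is $\dim X+c_1(X)\cdot d_1+2-3=5$, which matches $\deg_{\CC}a_2+\deg_{\CC}a_3=2+3$. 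Bilinear expansion with $a_2=H_1^2-H_2^2$ and $a_3=H_1^2H_2-H_1H_2^2$ then yields
\[
N=\langle H_1^2,H_1^2H_2\rangle_{d_1}-\langle H_1^2,H_1H_2^2\rangle_{d_1}-\langle H_2^2,H_1^2H_2\rangle_{d_1}+\langle H_2^2,H_1H_2^2\rangle_{d_1}.
\]

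Next I would describe the moduli of $d_1$-curves. A stable map of class $d_1$ has $H_2$-degree $0$, so it lies in a fibre $X_y=p_2^{-1}(y)$. It also has $H_1$-degree $1$, so it is a line in the quadric surface $X_y$, which is the double cover of $\PP^2_x$ branched along the conic $f(\cdot,y)=0$. Such lines map isomorphically onto tangent lines of that conic. Hence $\overline{M}_{0,0}(X,d_1)$ is the family of rulings of the fibres of $p_2$. This is a $\PP^1$-bundle over the double cover of $\PP^2_y$ branched along the discriminant sextic, and it has the expected dimension $3$. I expect this space to be unobstructed, at least away from singular fibres, so each invariant is an honest count. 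I would represent the classes geometrically:
\begin{itemize}
\item $H_2^2$ is the fibre $X_{y_0}$ over a general point;
\item $H_1^2$ is $p_1^{-1}(x_0)$;
\item $H_1^2H_2$ is the preimage of $\{x_0\}\times\ell_y$ for a line $\ell_y$;
\item $H_1H_2^2$ is the preimage of $\ell_x\times\{y_0\}$, a conic in $X_{y_0}$.
\end{itemize}

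The four counts then go as follows.
\begin{itemize}
\item $\langle H_2^2,H_1H_2^2\rangle_{d_1}$ vanishes, since both constraints force $y=y_0$ and $y=y_0'$ with $y_0\ne y_0'$.
\item $\langle H_2^2,H_1^2H_2\rangle_{d_1}$: the line lies in $X_{y_0}$ and meets the preimage of $(x_0,y)$, which forces $y=y_0\in\ell_y$; this is again empty for general choices, giving $0$.
\item $\langle H_1^2,H_1H_2^2\rangle_{d_1}$: the line lies in $X_{y_0}$, meets the curve $p_1^{-1}(x_0)\cap X_{y_0}$ (two points), and meets the conic over $\ell_x$. I would count these as lines of the quadric $X_{y_0}$ through one of the two points (two lines each), each meeting the conic once, giving $4$.
\item $\langle H_1^2,H_1^2H_2\rangle_{d_1}$: this requires $x$-points $x_0,x_0'$ on the same tangent line, with $y$ on $\ell_y$. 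The projected line must be the line through $x_0,x_0'$ tangent to the conic $f(\cdot,y)=0$. That is a condition on $y\in\ell_y$ of degree $2$ in $y$, and each solution gives one ruling line through the relevant points; I would aim to show this contribution is $0$ or cancels as required.
\end{itemize}
The target is $N=-4$, which is consistent with $M_-(q)$ in \cref{prop:ambient-diagonal-matrix}, where $-N/2=2$.

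The main obstacle is this last step: getting the individual counts, their multiplicities, and especially the signs exactly right, including possible contributions from singular quadric fibres over the discriminant. As a cross-check, I would test the resulting $M_-(q)$ for consistency against \cref{prop:ambient-zero-eigenspace}, whose stated characteristic polynomial $\lambda(\lambda^2-16q)$ of $\kappa|_{A^-}$ forces $-N/2=2$. I would also check against the WDVV associativity relations mixing $A^+$ and $A^-$, using the known values $s=2$, $t=6$, $u=2$, $v=16$: for instance, quantum multiplication by $H_1-H_2$ must commute with $H\star$, and this pins down $N$ once the $d_1$/$d_2$ split of $t$ and $u$ is known.
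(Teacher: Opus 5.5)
\textbf{There is a genuine gap, and the step you leave open cannot give the value you need.}

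Your reduction and three of your four counts are correct. The divisor axiom gives $N=\langle a_2,a_3\rangle_{d_1}$. The two terms with $H_2^2$ in the first slot vanish: $p_2\circ\ev_1=p_2\circ\ev_2$ on $\overline M_{0,2}(X,d_1)$, so the integrand contains the pullback of $h_2^3=0$ or $h_2^4=0$. And $\langle H_1^2,H_1H_2^2\rangle_{d_1}=4$ by restricting to a general fibre $X_{y_0}\cong\PP^1\times\PP^1$.

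The gap is the last term, which you hope is $0$ or ``cancels as required''. It is neither. For the fixed line $L_0=\overline{x_0x_0'}$, the restriction $f(\cdot,y)|_{L_0}$ is a binary quadratic form whose coefficients are quadrics in $y$. Tangency of $L_0$ to $C_y$ is the vanishing of its discriminant, which is a quartic in $y$, not a quadric. So there are $4$ points $y\in\ell_y$. Over each of them the preimage of $L_0$ in $X_y$ is two lines, one from each ruling, and each meets both cycles exactly once. Cohomologically, $M=\overline M_{0,0}(X,d_1)$ maps $2:1$ onto the tangency hypersurface $\Sigma\subset\check{\PP}^2\times\PP^2_y$ of bidegree $(2,4)$, via $(\psi,\phi)=(\text{projected line},\text{fibre})$. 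Pushing $\ev^*H_1^2$ down from the universal line gives $\psi^*\check h$, with $\check h$ the hyperplane class of $\check{\PP}^2$. Hence $\langle H_1^2,H_1^2H_2\rangle_{d_1}=\int_M\psi^*\check h^{2}\,\phi^*h_2=2\cdot 4=8$. Your expansion then gives $N=8-4-0+0=+4$. Carried out correctly, your argument contradicts the stated value rather than proving it.

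This sign is also forced by the paper's own data, independently of your method:
\begin{itemize}
\item The entry $t=6$ of $M_+(q)$ equals $\tfrac12\langle H_1^2+H_2^2,\,H_1^2H_2+H_1H_2^2\rangle_{d_1}$. So the two nonzero terms above sum to $12$, and since the second is $4$, the first is $8$.
\item The WDVV comparison you propose, of $H_2\star(H_1\star H_1^2)$ with $H_1\star(H_2\star H_1^2)$, gives $\langle H_1^2,H_1^2H_2\rangle_{d_1}=\langle[\mathrm{pt}],H_1^2H_2\rangle_{d_1+d_2}=v/2=8$, using $\langle[\mathrm{pt}]\rangle_{d_1}=2$.
\end{itemize}
With $N=4$, the coefficient of $a_1$ in $H\star a_2$, and of $a_2$ in $H\star a_3$, is $-\tfrac N2q=-2q$. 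So $\chi_{\kappa|A^-}(\lambda)=\lambda(\lambda^2+16q)$, not $\lambda(\lambda^2-16q)$.

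Your other cross-check, against $\lambda(\lambda^2-16q)$, is circular, because that polynomial was derived from $N=-4$. The paper's own argument only restricts to a single fibre of $p_2$. It therefore cannot capture the term $\langle H_1^2,H_1^2H_2\rangle_{d_1}$, whose constraints only confine the fibre to a line in $\PP^2_y$. The discrepancy does not affect the rank-one zero eigenspace of $\kappa|_{A^-}$, which is all the main theorem uses.
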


\begin{proof}
By the divisor axiom, since curves in the class \(d_1\) lie entirely in the fibres of
\[
  p_2:X\to\mathbb P^2,
\]
we can compute the invariant on a general fibre of \(p_2\).
\[
  N
  \;=\;
  \langle H,\,a_2,\,a_3\rangle_{d_i}
  \;=\;
  \langle a_2,\,a_3\rangle_{d_i}.
\]

The general fiber is a smooth quadric surface $Q \cong \PP^1 \times \PP^1$. Computing this gives $N = -4$. By the symmetry $d_1 \leftrightarrow d_2$, $d_2$ has the same value.
\end{proof}

Substituting this into \cref{eq:M-minus} gives 
\begin{equation*}
  M_-(q)
  \;=\;
  \begin{pmatrix}
    0 & 2q & 0  \\
    1 & 0  & 2q \\
    0 & 1  & 0
  \end{pmatrix}.
\end{equation*}

\end{document}